\newcommand{\ball}{\mathcal{B}}
\newcommand{\C}{\mathbb{C}}
\newcommand{\car}{\mathds{1}}
\newcommand{\coleq}{\mathrel{\mathop:}=}
\newcommand{\der}{\mathrm{d}}
\newcommand{\N}{\mathbb{N}}
\newcommand{\nep}{\mathrm{e}}
\newcommand{\Rl}{\mathbb{R}}
\newcommand{\Perm}{\mathfrak{S}}
\newcommand{\Sym}{\mathrm{Sym}}
\DeclareMathOperator{\Root}{Root}
\DeclareMathOperator{\Tr}{Tr}
\newenvironment{proof}{\noindent\textbf{Proof: }}{$\square$}
\newtheorem{thm}{Theorem}
\newtheorem{rmk}[thm]{Remark}
\newtheorem{lem}[thm]{Lemma}
\newtheorem{xpl}[thm]{Example}
\newtheorem{defi}[thm]{Definition}
\newtheorem{prop}[thm]{Proposition}
\title{A theory of functions of several variables applied to square matrices}
\author{Laurent Veysseire}
\date{}
\begin{document}
\maketitle
In this paper, we give one possible definition for functions of several variables applied to endomorphisms of finite dimensional $\C$-vector spaces.
This definition is consistent with the usual notion of a function of a square matrix.
The fact that with this definition, $f^\otimes(A,B)$ is not a square matrix of size $n$ anymore when $A$ and $B$ are square matrices of size $n$ (well, except in the trivial case where $n=1$) can seem weird and unsatisfying, but this objects naturally appear when one differentiates a smooth function of a matrix.

This work was supported by the TECHNION, Israel Institute of Tecnology, Haifa, Israel.

\section{definitions and notations}
As for defining functions in one variable applied to a single matrix (as done in \cite{GNT59}), we start with the simple case of polynomials.

\begin{defi}\label{polyM} Let $k\in\N^*$, and let $M_1,\ldots,M_k$ be endomorphisms of the finite dimensional $\C$-vector spaces $E_1,\ldots,E_k$ (their dimensions $n_1,\ldots,n_k$ may be different).
Let 
$$P(x_1,\ldots,x_k)=\sum_{\alpha\in F} c_\alpha x_1^{\alpha_1}\ldots x_k^{\alpha_k}$$
be a polynomial of $k$ variables (here $F$ is a finite subset of $\N^k$, $c_\alpha\in\C$ and the $\alpha_l$'s are the coordinates of $\alpha$).
We set:
$$P^\otimes(M_1,\ldots,M_k)\coleq\sum_{\alpha\in F}c_\alpha M_1^{\alpha_1}\otimes\ldots\otimes M_k^{\alpha_k}\in\bigotimes_{l=1}^kE_l\otimes E_l^*.$$
\end{defi}

\begin{rmk}\label{realpolyM}In this definition, the tensor products are implicitely taken over $\C$.
We can use a similar definition if $E_1,\ldots,E_k$ are $\Rl$-vector spaces and if $P$ has real coefficients, but with tensor products over $\Rl$.
In the cases when some of the $E_l$'s are $\Rl$-vector spaces and the other ones are $\C$-vector spaces, or if all the $E_l$'s are $\Rl$-vector spaces and $P$ has complex coefficients, we can complexify the real vector spaces by replacing the concerned $E_l$'s by $E'_l=\C\otimes_\Rl E_l$ and $M_l$ by its natural $\C$-linear extension $M'_l$ to $E'_l$, and use the classical version of definition \ref{polyM}.
\end{rmk}

\begin{rmk} We use the notation $P^\otimes(M_1,\ldots,M_k)$ and not $P(M_1,\ldots,M_k)$, because doing so would be confusing, and because the $\otimes$ sign reminds you it is a tensor of higher order.
For example, we have $((x,y)\mapsto x+y)^{\otimes}(A,B)=A\otimes I+I\otimes B\neq A+B$ in general (even in simple cases where $A=B$ or $B=0$).
\end{rmk}

\begin{lem}\label{conjug}
Let $k\in\N^*$, and let $M_1,\ldots,M_k$ be endomorphisms of the finite dimensional $\C$-vector spaces $E_1,\ldots,E_k$.
For $1\leq l\leq k$, let $A_{(l)}$ be an invertible $\C$-linear application from $E_l$ to another $\C$-vector space $F_l$ of same dimension than $E_l$.
We set $M'_l=A_{(l)}M_lA_{(l)}^{-1}\in L(F_l,F_l)$.
Let $P$ be a polynomial of $k$ variables.
Then we have:
\begin{multline*}P^\otimes(M'_1,\ldots,M'_k)^{i_1}{}_{j_1}\ldots^{i_k}{}_{j_k}=\\A_{(1)}{}^{i_1}{}_{i'_1}A_{(1)}^{-1}{}^{j'_1}{}_{j_1}\ldots A_{(k)}{}^{i_k}{}_{i'_k}A_{(k)}^{-1}{}^{j'_k}{}_{j_k}P^\otimes(M_1,\ldots,M_k)^{i'_1}{}_{j'_1}\ldots^{i'_k}{}_{j'_k}.
\end{multline*}
\end{lem}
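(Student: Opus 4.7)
The plan is to reduce the statement to the monomial case by linearity, then to observe that conjugation commutes with the non-commutative monomial structure because each $M_l$ lives in its own tensor slot, so the only identity needed is the elementary $(A_{(l)}M_lA_{(l)}^{-1})^{\alpha_l}=A_{(l)}M_l^{\alpha_l}A_{(l)}^{-1}$.

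First, since both sides of the claimed identity are $\C$-linear in the coefficients $c_\alpha$ of $P$, I would fix a single monomial $P(x_1,\ldots,x_k)=x_1^{\alpha_1}\ldots x_k^{\alpha_k}$ and prove the identity in that case; the general case then follows by summing with weights $c_\alpha$. For such a monomial, $P^\otimes(M_1,\ldots,M_k)=M_1^{\alpha_1}\otimes\ldots\otimes M_k^{\alpha_k}$ and similarly $P^\otimes(M'_1,\ldots,M'_k)=(M'_1)^{\alpha_1}\otimes\ldots\otimes(M'_k)^{\alpha_k}$.

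Second, for each $l$ I would establish the telescoping identity $(M'_l)^{\alpha_l}=A_{(l)}M_l^{\alpha_l}A_{(l)}^{-1}$ by a trivial induction on $\alpha_l$ (the intermediate factors $A_{(l)}^{-1}A_{(l)}$ cancel), valid also for $\alpha_l=0$ since $A_{(l)}I_{E_l}A_{(l)}^{-1}=I_{F_l}$. Taking the tensor product over $l$ and expanding each factor in coordinates yields
\begin{multline*}
\bigl(A_{(l)}M_l^{\alpha_l}A_{(l)}^{-1}\bigr)^{i_l}{}_{j_l}
= A_{(l)}{}^{i_l}{}_{i'_l}\,\bigl(M_l^{\alpha_l}\bigr)^{i'_l}{}_{j'_l}\,A_{(l)}^{-1}{}^{j'_l}{}_{j_l},
\end{multline*}
with Einstein summation over the primed indices. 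Multiplying these $k$ scalar identities together (the tensor product of endomorphisms is just the product of their matrix entries in the tensor basis) reassembles exactly the right-hand side of the lemma, since by the monomial case the factor $(M_l^{\alpha_l})^{i'_l}{}_{j'_l}$ combines into $P^\otimes(M_1,\ldots,M_k)^{i'_1}{}_{j'_1}\ldots{}^{i'_k}{}_{j'_k}$.

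The main obstacle is purely notational: one has to keep the up/down primed and unprimed indices properly paired between the $A_{(l)}$ factors, the $A_{(l)}^{-1}$ factors, and the tensor slots of $P^\otimes(M_1,\ldots,M_k)$. No conceptual difficulty arises, because the $M_l$'s never need to be commuted — each $M_l$ is confined to the $l$-th tensor factor, so no relation between the various $A_{(l)}$'s is needed, and the proof reduces to independently conjugating each tensor slot.
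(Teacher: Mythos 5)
Your proposal is correct and follows essentially the same route as the paper's own (very terse) proof: reduce to a single monomial by linearity in the coefficients $c_\alpha$, then use the telescoping identity $(A_{(l)}M_lA_{(l)}^{-1})^{\alpha_l}=A_{(l)}M_l^{\alpha_l}A_{(l)}^{-1}$ independently in each tensor slot. You simply spell out the index bookkeeping that the paper leaves implicit; nothing differs in substance.
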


\begin{proof}In the simple case where $P$ is a monomial, it follows from the fact that $(AMA^{-1})^n=AM^nA^{-1}$.
The more general case where $P$ is any polynomial easyly follows by linearity.
\end{proof}

Any complex square matrix can be put in a Jordan form by conjugation by an invertible matrix.
Let us see what one gets when all the matrices $M_1,\ldots,M_k$ are Jordan matrices.

\begin{xpl}\label{jordan}
Let $M_1,\ldots,M_k$ be Jordan matrices, i.e, for $1\leq l\leq k$, the matrix $M_l$ has the following form:
$$M_l=\left(
\begin{array}{cccc}J_{\lambda_{l1},r_{l1}}&0&\ldots&0\\
0&J_{\lambda_{l2},r_{l2}}&\ddots&\vdots\\
\vdots&\ddots&\ddots&0\\
0&\ldots&0&J_{\lambda_{lb_l},r_{lb_l}}
\end{array}\right),$$
where $b_l$ is the number of jordan blocks in $M_l$, $J_{\lambda,r}$ is the $r\times r$ square matrix
$$J_{\lambda,r}=\left(
\begin{array}{ccccc}\lambda&1&0&\ldots&0\\
0&\lambda&1&\ddots&\vdots\\
\vdots&\ddots&\ddots&\ddots&0\\
\vdots&\ddots&\ddots&\lambda&1\\
0&\ldots&\ldots&0&\lambda
\end{array}\right),$$
the $\lambda_{lm}$ are the eigenvalues of $M_l$ and the $r_{lm}$ are the sizes of the Jordan blocks of $M_l$.
Let $P$ be a polynomial on $k$ variables.

Let us choose the values of the indexes $i_1,\ldots,i_k$ and $j_1,\ldots,j_k$.
Then for each $1\leq l\leq k$, there exist $1\leq c_l,d_l\leq b_l$ such that $\sum_{m=1}^{c_l-1}r_{lm}<i_l\leq\sum_{m=1}^{c_l}r_{lm}$ and $\sum_{m=1}^{d_l-1}r_{lm}<j_l\leq\sum_{m=1}^{d_l}r_{lm}$.
Then the coefficient $P^\otimes(M_1,\ldots,M_k)^{i_1}{}_{j_1}\ldots^{i_k}{}_{j_k}$ is given by the following formulas:
\begin{itemize}
\item{}If for some $1\leq l\leq k$, one has $i_l>j_l$ or $c_l\neq d_l$, then the coefficient $P^\otimes(M_1,\ldots,M_k)^{i_1}{}_{j_1}\ldots^{i_k}{}_{j_k}$ is $0$.\\
\item{}If for all $1\leq l\leq k$, one has $i_l\leq j_l$ and $c_l=d_l$, then the coefficient $P^\otimes(M_1,\ldots,M_k)^{i_1}{}_{j_1}\ldots^{i_k}{}_{j_k}$ is $\left[\prod_{l=1}^k\frac{1}{(j_l-i_l)!}\partial_l^{j_l-i_l}\right]P(\lambda_{1c_1},\ldots,\lambda_{kc_k})$.
\end{itemize}
\end{xpl}

\begin{proof}Like in the proof of Lemma \ref{conjug}, we start with the simple case where $P$ is a monomial.
In this case, $P(x_1,\ldots,x_k)=x_1^{\alpha_1}\ldots x_k^{\alpha_k}$ and $P^\otimes(M_1,\ldots,M_k)^{i_1}{}_{j_1}\ldots^{i_k}{}_{j_k}=\prod_{l=1}^kM_l^{\alpha_l}{}^{i_l}{}_{j_l}$.

One easyly gets the expression given in Example \ref{jordan} for the coefficients of $P^\otimes(M_1,\ldots,M_k)$ by using the special form of the coefficients of powers of Jordan matrices, and the fact that $\left[\prod_{l=1}^k\frac{\partial}{\partial x_l}^{a_l}\right]\left(\prod_{l=1}^kf_l(x_l)\right)=\prod_{l=1}^kf_l^{(a_l)}(x_l)$.

The case of a more general polynomial $P$ trivially follows by linearity.
\end{proof}

\begin{prop}\label{zero}
Let $k\in\N^*$, and let $M_1,\ldots,M_k$ be square matrices with complex coefficients.
Let $P_1,\ldots,P_k$ be their minimal polynomials.

Then the set of polynomials $P$ of $k$ variables such that
$$P^\otimes(M_1,\ldots,M_k)=0$$
is the ideal generated by the polynomials $P_l(x_l)$, for $1\leq k\leq l$.
\end{prop}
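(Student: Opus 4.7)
The key observation is that $P\mapsto P^\otimes(M_1,\ldots,M_k)$ is an algebra homomorphism from $\C[x_1,\ldots,x_k]$ into $\mathrm{End}(E_1\otimes\cdots\otimes E_k)$. On monomials this is a consequence of the identity $(A\otimes B)(C\otimes D)=(AC)\otimes(BD)$, which gives
$$(x_1^{\alpha_1}\cdots x_k^{\alpha_k})^\otimes\cdot(x_1^{\beta_1}\cdots x_k^{\beta_k})^\otimes = M_1^{\alpha_1+\beta_1}\otimes\cdots\otimes M_k^{\alpha_k+\beta_k} = (x_1^{\alpha_1+\beta_1}\cdots x_k^{\alpha_k+\beta_k})^\otimes,$$
and the general case follows by bilinearity. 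The kernel is then an ideal, and the inclusion $\supseteq$ is immediate: $P_l(x_l)^\otimes(M_1,\ldots,M_k) = I\otimes\cdots\otimes P_l(M_l)\otimes\cdots\otimes I$ vanishes because $P_l$ is the minimal polynomial of $M_l$.

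For the reverse inclusion I would first use Lemma \ref{conjug} to reduce to the situation where each $M_l$ is in Jordan form, which is legitimate because simultaneous conjugation preserves both the minimal polynomials and the vanishing of $P^\otimes(M_1,\ldots,M_k)$. Example \ref{jordan} then exhibits every entry of $P^\otimes(M_1,\ldots,M_k)$ as (up to a nonzero factorial) a value $\partial_1^{a_1}\cdots\partial_k^{a_k} P(\lambda_{1c_1},\ldots,\lambda_{kc_k})$, where $(c_1,\ldots,c_k)$ indexes a choice of Jordan block in each $M_l$ and $a_l = j_l-i_l\in\{0,\ldots,r_{lc_l}-1\}$. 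Since every tuple of eigenvalues arises for some choice of blocks and the orders $a_l$ within the block reach up to $r_{lc_l}-1$, the condition $P^\otimes(M_1,\ldots,M_k)=0$ is equivalent to
$$\partial_1^{a_1}\cdots\partial_k^{a_k} P(\lambda_1,\ldots,\lambda_k) = 0 \quad\text{for all }0\le a_l < m_l(\lambda_l),$$
where $m_l(\lambda)$ denotes the largest size of a Jordan block of $M_l$ with eigenvalue $\lambda$, so that $P_l(x)=\prod_\lambda(x-\lambda)^{m_l(\lambda)}$.

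The final step is to identify this family of vanishing conditions with membership in $(P_1(x_1),\ldots,P_k(x_k))$. By the Chinese Remainder Theorem $\C[x_l]/(P_l(x_l)) \cong \prod_\lambda \C[x_l]/((x_l-\lambda)^{m_l(\lambda)})$, and tensoring over $l$ yields
$$\C[x_1,\ldots,x_k]/(P_1(x_1),\ldots,P_k(x_k)) \;\cong\; \prod_{(\lambda_1,\ldots,\lambda_k)} \bigotimes_{l=1}^k \C[x_l]/((x_l-\lambda_l)^{m_l(\lambda_l)}),$$
where the component at a tuple $(\lambda_1,\ldots,\lambda_k)$ is the truncated Taylor algebra in which a polynomial is zero iff all its partial derivatives $\partial_1^{a_1}\cdots\partial_k^{a_k}$ with $0\le a_l<m_l(\lambda_l)$ vanish at the point. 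The main obstacle is the bookkeeping matching the Jordan entries given by Example \ref{jordan} to a basis of this quotient; a dimension count $\sum_{(\lambda_1,\ldots,\lambda_k)}\prod_l m_l(\lambda_l) = \prod_l \deg P_l$ confirms that no condition is missed or redundant, which closes the proof.
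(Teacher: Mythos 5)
Your proposal is correct, and it genuinely diverges from the paper's proof at both ends, even though the middle step --- reducing to Jordan form via Lemma \ref{conjug} and translating $P^\otimes(M_1,\ldots,M_k)=0$ into the vanishing of the derivatives $\prod_l\partial_l^{a_l}P$ at eigenvalue tuples via Example \ref{jordan} --- is the same. For the easy inclusion, you note that $P\mapsto P^\otimes(M_1,\ldots,M_k)$ is an algebra homomorphism (a fact the paper only establishes later, inside the proof of Theorem \ref{product}), so its kernel is automatically an ideal containing each $P_l(x_l)$; the paper instead checks by hand that the relevant partial derivatives of $P_l(x_l)Q$ vanish at the eigenvalues. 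For the reverse inclusion, the paper divides by the Gr\"obner basis $P_1(x_1),\ldots,P_k(x_k)$ and kills the remainder by exhibiting Lagrange--Sylvester interpolation polynomials dual to the evaluation functionals, concluding with a dimension count; you instead apply the Chinese Remainder Theorem to split $\C[x_1,\ldots,x_k]/(P_1(x_1),\ldots,P_k(x_k))$ into truncated Taylor algebras at the eigenvalue tuples, in which vanishing of a class is exactly the stated derivative conditions (this last identification deserves the one line via the Taylor expansion around $(\lambda_1,\ldots,\lambda_k)$ that you leave implicit, and it makes your closing dimension count redundant rather than load-bearing). Your route is shorter and more conceptual; the paper's more constructive route pays for itself because the interpolation polynomials $P_{m_1j_1\ldots m_kj_k}$ built here are reused later, in the remark following Definition \ref{funcM} and in the proof of Theorem \ref{diff}.
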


\begin{proof}
There exist $M'_1,\ldots,M'_k$ Jordan matrices and $A_1,\ldots,A_k$ invertibles such that $M'_l=A_lM_lA_l^{-1}$.
According to Lemma \ref{conjug}, we have
$$P^\otimes(M_1,\ldots,M_k)=0\Leftrightarrow P^\otimes(M'_1,\ldots,M'_k)=0.$$
For $1\leq l\leq k$, we set $\lambda_{lm}$ the eigenvalues of $M_l$, and $r_{lm}$ their multiplicity as root of the minimal polynomial $P_l$ of $M_l$ (here the index $m$ varies from $1$ to the number $e_l$ of distinct eigenvalues of $M_l$).
Then according to example \ref{jordan}, $P^\otimes(M'_1,\ldots,M'_k)=0$ is equivalent to:
for all $(m_l)_{1\leq l\leq k},(j_l)_{1\leq l\leq k}$ satisfying $1\leq m_l\leq e_l$ and $0\leq j_l<r_{lm_l}$, one has
$$\left[\prod_{l=1}^k\partial_l^{j_l}\right]P(\lambda_{1m_1},\ldots,\lambda_{km_k})=0.$$

Assume that there exists $l$ such that $P(x_1,\ldots,x_k)=P_l(x_l)Q(x_1,\ldots,x_k)$, where $Q$ is a polynomial.
Then for any $1\leq m\leq e_l$ and any $0\leq j< m_{lm}$, $x_l-\lambda_{lm}$ divides $\partial_l^{j}P(x_1,\ldots,x_k)$, so it also divides all the derivatives of $\partial_l^{j}P(x_1,\ldots,x_k)$ with respect to all variables but $x_l$, thus they cancel when $x_l=\lambda_{lm}$.
Thus $P(M_1,\ldots,M_k)=0$.
By linearity, for any polynomial $P$ in the ideal generated by $P_1(x_1),\ldots,P_k(x_k)$, we have $P(M_1,\ldots,M_k)=0$.

Conversely, let $P$ be a polynomial such that $P(M_1,\ldots,M_k)=0$.
There exists two polynomials $Q$ and $R$ such that $P=Q+R$, $Q$ belongs to the ideal generated by $P_1(x_1),\ldots,P_k(x_k)$ and the degree of $R$ with respect to the variable $x_l$ is lower than $\deg(P_l)$ (in the sense that any monomial $x_1^{\alpha_1}\ldots x_k^{\alpha_k}$ in $R$ satisfies $\alpha_l<\deg(P_l)$).
This fact can be shown by performing Euclidean division by the Gr\"obner basis $P_1(x_1),\ldots,P_k(x_k)$ (this is a Gr\"obner basis for any monomial order) in the space of polynomials on $k$ variables.
For any $1\leq l\leq k$, any $1\leq m\leq e_l$ and any $0\leq j<r_{lm}$, there exists a polynomial of one variable $P_{lmj}$ of degree less than $\deg(P_l)$ such that for any $1\leq m'\leq e_l$ and any $0\leq j'<r_{lm'}$, one has $P_{lmj}^{(j')}(\lambda_{lm'})=\car_{m=m',j=j'}$ (this follows from the classical theory of Lagrange--Sylvester interpolation polynomials).
For any $m_1,\ldots,m_k$ and $j_1,\ldots,j_k$ such that $1\leq m_l\leq e_l$ and $0\leq j_l<r_{lm_l}$, we set $P_{m_1j_1\ldots m_kj_k}(x_1,\ldots x_k)=\prod_{l=1}^kP_{lm_lj_l}(x_l)$.

Let us consider the linear map $\nu$ which associates to any polynomial $S(x_1,\ldots,x_k)$ of $k$ variables such that its degree with respect to $x_l$ is less than $\deg(P_l)$, the following $\prod_{l=1}^k\deg(P_l)$ values:
for any $m_1,\ldots,m_k$ and $j_1,\ldots,j_k$ satisfying $1\leq m_l\leq e_l$ and $0\leq j_l<r_{lm_l}$, we set
$$\nu_{m_1j_1\ldots m_kj_k}(S)\coleq\left[\prod_{l=1}^k \partial_l^{j_l}\right]S(\lambda_{1m_1},\ldots,\lambda_{km_k}).$$
Then we have $\nu_{m_1j_1\ldots m_kj_k}(P_{m'_1j'_1\ldots m'_kj'_k})=1$ if $m_l=m'_l$ and $j_l=j'_l$ for every $1\leq l\leq k$ and $\nu_{m_1j_1\ldots m_kj_k}(P_{m'_1j'_1\ldots m'_kj'_k})=0$ otherwise, so the linear map $\nu$ is surjective.
So because of the equality of the (finite) dimensions of its domain and its image, $\nu$ is also injective.
So since $\nu(R)=0$, we have $R=0$ and thus $P=Q$ belongs to the ideal generated by the $P_l(x_l)$.
\end{proof}

So one does see from Proposition \ref{zero} that the dependancy in $P$ of $P(M_1,\ldots,M_k)$ only relies on the values of $P$ and of some of its derivatives at the eigenvalues of the matrices $M_l$, $1\leq l\leq k$.
This justifies the following definition for more general functions than polynomials.
\begin{defi}\label{funcM}
Let $k\in\N^*$, and let $M_1,\ldots,M_k$ be endomorphisms of the finite dimensional $\C$-vector spaces $E_1,\ldots,E_k$.
For $1\leq l\leq k$, let $e_l$ be the number of different eigenvalues of $M_l$, $\lambda_{lm}$ be the eigenvalues of $M_l$, for $1\leq m\leq e_l$, and $r_{lm}$ be the multiplicity of the root $\lambda_{lm}$ in the minimal polynomial of $M_l$.
Let $f$ be a function of $k$ complex variables.
Assume that for any $k$-uple of integers $(m_1,\ldots,m_k)$ such that $1\leq m_l\leq e_l$ for $1\leq l\leq k$, the function $f(x_1,\ldots,x_k)$ is holomorphic with respect to the variables $x_l$ with $l$ satisfying $r_{lm_l}\geq 2$ near $(\lambda_{1m_1},\ldots,\lambda_{km_k})$.
That is, if we denote by $l_1,\ldots,l_p$ the indexes such that $r_{l_q}\geq 2$ for $1,\leq q\leq p$, the function
$$g:\left\{\begin{array}{rcl}\C^p&\mapsto&\C\\
(y_1,\ldots,y_p)&\rightarrow&f\left((\lambda_{1m_1},\ldots,\lambda_{km_k})+\sum_{q=1}^py_qv_{l_q}\right)\end{array}\right.$$
admits a continuous $\C$-linear differential on a neighborhood of $0$, where $v_l$ is the $l$-th vector of the canonical basis of $\C^k$.
Then we set:
$$f^\otimes(M_1,\ldots,M_k)\coleq P^\otimes(M_1,\ldots,M_k),$$
with $P$ any complex polynomial of $k$ variables such that for all $(m_1,\ldots,m_k)$ and $(j_1,\ldots,j_k)$ such that $1\leq m_l\leq e_l$ and $0\leq j_l<r_{lm_l}$ for $1\leq l\leq k$, we have:
$$\left[\prod_{l=1}^k\partial_l^{j_l}\right]f(\lambda_{1m_1},\ldots,\lambda_{km_k})=\left[\prod_{l=1}^k\partial_l^{j_l}\right]P(\lambda_{1m_1},\ldots,\lambda_{km_k}).$$
\end{defi}
\begin{rmk}The function $f$ does not really need to be defined on the whole $\C^k$, but only on the points whose coordinates are the eigenvalues of the $M_l$'s and on some neighborhoods of those points intersected with some affine subspaces.
\end{rmk}
\begin{rmk}If one wants to generalize this definition in a real framework, as said in Remark \ref{realpolyM}, one can not just assume $f$ is a real function of real variables, because real matrices may have non-real eigenvalues.

The right thing to do is to chose $f$ such that $f(\bar{z_1},\ldots,\bar{z_k})=\overline{f(z_1,\ldots,z_k)}$ to be sure there exists a polynomial $P$ with real coefficients satisfying all the equalities required.
\end{rmk}
\begin{rmk}The polynomial $P$ can be chosen to be $$\sum \left(\left[\prod_{l=1}^k\partial_l^{j_l}\right]f(\lambda_{1m_1},\ldots,\lambda_{km_k})\right)P_{m_1j_1\ldots m_kj_k}(x_1,\ldots,x_k),$$ with the same notation as in the proof of Proposition \ref{zero}.
In this case, we say $P$ is the Lagrange--Sylvester interpolation polynomial of $f$ associated to the product of multisets $\prod_{l=1}^k\Root(P_l)$, where $\Root(P_l)$ is the multiset of the roots of $P_l$, counted with their multiplicity.
\end{rmk}

\section{Some rules for computations}
In this section, we show that $f^\otimes(M_1,\ldots,M_k)$ behaves like fonctions of several variables do, and we show that some contractions of $f^\otimes(M_1,\ldots,M_k)$ can have a simplified expression.
Finally, we show the main result of this paper, which is the expression of the derivative of $f^\otimes(M_1,\ldots,M_k)$ with respect to the matrices $M_l$.

The tensor $f^\otimes(M_1,\ldots,M_k)$ applied to eigenvectors of $M_1,\ldots,M_k$ has a simplified expression.
This allows to give an alternative expression for $f^\otimes(M_1,\ldots,M_k)$ when $M_1,\ldots,M_k$ are all diagonalizable.

\begin{prop}\label{evapp}Let $k\in\N^*$, and let $M_1,\ldots,M_k$ be endomorphisms of the finite dimensional $\C$-vector spaces $E_1,\ldots,E_k$.
For $1\leq l\leq k$, let $u_l\in E_l$ be an eigenvector of $M_l$ for the eigenvalue $\lambda_l$.
then we have
$$f^\otimes(M_1,\ldots,M_k)^{i_1}{}_{j_1}{}\ldots^{i_k}{}_{j_k}(u_1)^{j_1}\ldots(u_k)^{j_k}=f(\lambda_1,\ldots,\lambda_k)(u_1)^{i_1}\ldots(u_k)^{i_k}.$$
\end{prop}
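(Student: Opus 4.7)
The plan is to reduce the statement first from general $f$ to polynomials, then from polynomials to monomials, where the claim becomes a direct computation.

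First, by Definition \ref{funcM}, we can pick a polynomial $P$ of $k$ variables such that $f^\otimes(M_1,\ldots,M_k) = P^\otimes(M_1,\ldots,M_k)$ and such that the interpolation conditions
$$\left[\prod_{l=1}^k\partial_l^{j_l}\right]f(\lambda_{1m_1},\ldots,\lambda_{km_k}) = \left[\prod_{l=1}^k\partial_l^{j_l}\right]P(\lambda_{1m_1},\ldots,\lambda_{km_k})$$
hold for all admissible indices. In particular, taking all $j_l = 0$ (which is always permitted since $r_{lm_l} \geq 1$), we get $P(\lambda_1,\ldots,\lambda_k) = f(\lambda_1,\ldots,\lambda_k)$ at the particular $k$-tuple of eigenvalues corresponding to $u_1,\ldots,u_k$. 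So it suffices to prove the statement with $f$ replaced by the polynomial $P$.

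Next, by the linearity of $P \mapsto P^\otimes(M_1,\ldots,M_k)$ in the coefficients of $P$, it suffices to establish the equality when $P$ is a single monomial $P(x_1,\ldots,x_k) = x_1^{\alpha_1}\cdots x_k^{\alpha_k}$. By Definition \ref{polyM}, $P^\otimes(M_1,\ldots,M_k) = M_1^{\alpha_1}\otimes \cdots \otimes M_k^{\alpha_k}$. Contracting this with $u_1 \otimes \cdots \otimes u_k$ on the lower indices separates into a tensor product of applications of $M_l^{\alpha_l}$ to $u_l$. Since $u_l$ is an eigenvector of $M_l$ for eigenvalue $\lambda_l$, we have $M_l^{\alpha_l} u_l = \lambda_l^{\alpha_l} u_l$, and the tensor product of these identities yields exactly $\lambda_1^{\alpha_1}\cdots\lambda_k^{\alpha_k}\, u_1 \otimes \cdots \otimes u_k = P(\lambda_1,\ldots,\lambda_k)\, u_1 \otimes \cdots \otimes u_k$.

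There is no real obstacle here: the main thing to watch is that the reduction to polynomials uses only the $0$-th order interpolation condition at the specific eigenvalues $(\lambda_1,\ldots,\lambda_k)$, which is always imposed by Definition \ref{funcM}. So the argument cleanly splits into (i) invoking the defining interpolation property to swap $f$ for $P$, (ii) reducing to monomials by linearity, and (iii) the one-line eigenvector computation.
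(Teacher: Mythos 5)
Your proof is correct and follows essentially the same route as the paper's: a direct eigenvector computation for monomials, extension to polynomials by linearity, and the passage to general $f$ via the interpolation polynomial $P$, using that the zeroth-order interpolation conditions force $P$ and $f$ to agree at the tuple of eigenvalues. The only difference is cosmetic ordering (you state the reduction to $P$ first, the paper does the monomial case first), and your explicit remark that only the $j_l=0$ conditions are needed is a nice clarification of what the paper leaves implicit.
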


\begin{proof}In the simple case where $f$ is a monomial, we have $f(x_1,\ldots,x_k)=x_1^{\alpha_1}\ldots x_k^{\alpha_k}$, with $\alpha_l\in\N$ for $1\leq l\leq k$.
We get
\begin{align*}f^\otimes(M_1,\ldots,M_k)^{i_1}{}_{j_1}{}\ldots^{i_k}{}_{j_k}(u_1)^{j_1}\ldots(u_k)^{j_k}&=\left(M_1^{\alpha_1}\right)^{i_1}{}_{j_1}\ldots\left(M_k^{\alpha_k}\right)^{i_k}{}_{j_k}(u_1)^{j_1}\ldots(u_k)^{j_k}\\
{}&=\lambda_1^{\alpha_1}(u_1)^{i_1}\ldots\lambda_k^{\alpha_k}(u_k)^{i_k}\\
{}&=f(\lambda_1,\ldots,\lambda_k)(u_1)^{i_1}\ldots(u_k)^{i_k}.\end{align*}

By linearity, this extends to the case where $f$ is a polynomial.
For the more general case, we have $f^\otimes(M_1,\ldots,M_k)=P^\otimes(M_1,\ldots,M_k)$ where $P$ is a polynomial of $k$ variables satisfying the conditions described in Definition \ref{funcM}. In particular, $P$ has the same values $f$ has on the eigenvalues of $M_1,\ldots,M_k$, so the desired equality holds for a general function $f$.
\end{proof}

\begin{rmk}In the case where $M_1,\ldots,M_k$ are all diagonalizable, for all $1\leq l\leq k$ we can take a basis of $E_l$ made of eigenvectors of $M_l$.
Let us denote by $u_{lm}$, for $1\leq m\leq\mathrm{dim}(E_l)=d_l$ the vectors of this basis, by $\lambda_{lm}$ the corresponding eigenvalue and by $u^\ast_{lm}\in E^\ast_l$ the corresponding vector of the dual basis.

Then the family of tensors $\bigotimes_{l=1}^ku_{lm_l}\otimes u^\ast_{ln_l}$, for $1\leq m_l\leq d_l$ and $1\leq n_l\leq d_l$, is a basis of $\bigotimes_{l=1}^kE_l\otimes E^\ast_l$.
According to Proposition \ref{evapp}, the decomposition of the tensor $f^\otimes(M_1,\ldots,M_k)$ in this basis can only be
$$f^\otimes(M_1,\ldots,M_k)=\sum_{\forall 1\leq l\leq k, 1\leq m_l\leq d_l}f(\lambda_{1m_1},\ldots,\lambda_{km_k})\bigotimes_{l=1}^ku_{lm_l}\otimes u^\ast_{lm_l}.$$
\end{rmk}

One of the simplest properties of $f^\otimes(M_1,\ldots,M_k)$ is its linearity with respect to $f$.

\begin{prop}Let $k\in\N^*$, and let $M_1,\ldots,M_k$ be endomorphisms of the finite dimensional $\C$-vector spaces $E_1,\ldots,E_k$.
Let $\lambda$ and $\mu$ be two complex numbers, and $f$ and $g$ be two functions from $\C^k$ to $\C$, regular enough such that $f^\otimes(M_1,\ldots,M_k)$ and $g^\otimes(M_1,\ldots,M_k)$ can be defined.
Then the function $\lambda f+\mu g$ has the same regularity, and we have:
$$(\lambda f+\mu g)^\otimes(M_1,\ldots,M_k)=\lambda f^\otimes(M_1,\ldots,M_k)+\mu g^\otimes(M_1,\ldots,M_k).$$
\end{prop}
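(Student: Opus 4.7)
The plan is to reduce everything to the already-established linearity for polynomials via the interpolation definition (Definition \ref{funcM}). The regularity claim and the identity both decompose into an easy holomorphicity check and a polynomial linearity step, so the argument should be short.

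First I would verify that $\lambda f + \mu g$ satisfies the regularity hypothesis of Definition \ref{funcM}. Fix any tuple $(m_1,\ldots,m_k)$ with $1\leq m_l\leq e_l$ and let $l_1,\ldots,l_p$ be the indices with $r_{l_q m_{l_q}}\geq 2$. By assumption, both $f$ and $g$ admit a continuous $\C$-linear differential on a neighborhood of $0$ for the partial evaluation map $g$ described in the definition; since holomorphicity is preserved under $\C$-linear combinations, so does $\lambda f + \mu g$. Hence $(\lambda f + \mu g)^\otimes(M_1,\ldots,M_k)$ is well-defined.

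Next I would pick interpolation polynomials. Let $P$ and $Q$ be complex polynomials of $k$ variables such that for all admissible $(m_1,\ldots,m_k)$ and $(j_1,\ldots,j_k)$,
\begin{align*}
\left[\prod_{l=1}^k\partial_l^{j_l}\right]P(\lambda_{1m_1},\ldots,\lambda_{km_k}) &= \left[\prod_{l=1}^k\partial_l^{j_l}\right]f(\lambda_{1m_1},\ldots,\lambda_{km_k}),\\
\left[\prod_{l=1}^k\partial_l^{j_l}\right]Q(\lambda_{1m_1},\ldots,\lambda_{km_k}) &= \left[\prod_{l=1}^k\partial_l^{j_l}\right]g(\lambda_{1m_1},\ldots,\lambda_{km_k}).
\end{align*}
Because partial differentiation is linear, the polynomial $\lambda P + \mu Q$ matches $\lambda f + \mu g$ on exactly the same list of derivative values. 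Hence by Definition \ref{funcM} (whose well-definedness comes from Proposition \ref{zero}),
$$(\lambda f+\mu g)^\otimes(M_1,\ldots,M_k)=(\lambda P+\mu Q)^\otimes(M_1,\ldots,M_k).$$

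Finally I would invoke Definition \ref{polyM} directly: writing $P=\sum_\alpha c_\alpha x^\alpha$ and $Q=\sum_\alpha d_\alpha x^\alpha$, the coefficients of $\lambda P+\mu Q$ are $\lambda c_\alpha + \mu d_\alpha$, and summing the resulting monomials $M_1^{\alpha_1}\otimes\cdots\otimes M_k^{\alpha_k}$ yields $\lambda P^\otimes(M_1,\ldots,M_k) + \mu Q^\otimes(M_1,\ldots,M_k)$. Combining this with the previous display and with $P^\otimes(M_1,\ldots,M_k)=f^\otimes(M_1,\ldots,M_k)$, $Q^\otimes(M_1,\ldots,M_k)=g^\otimes(M_1,\ldots,M_k)$ gives the claimed identity. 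There is no real obstacle here; the only thing to be careful about is that the well-definedness of $(\lambda f+\mu g)^\otimes$ independently of the interpolating polynomial is precisely what Proposition \ref{zero} guarantees, which is why the reduction to polynomials legitimately goes through.
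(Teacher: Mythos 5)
Your proposal is correct and follows essentially the same route as the paper's proof: choose interpolation polynomials $P$ and $Q$ for $f$ and $g$, observe that $\lambda P+\mu Q$ interpolates $\lambda f+\mu g$ by linearity of partial derivatives, and conclude by the linearity of Definition \ref{polyM}. Your added remarks on the regularity check and on Proposition \ref{zero} guaranteeing well-definedness are fine elaborations of points the paper leaves implicit.
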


\begin{proof}If $P$ and $Q$ are interpolation polynomials of $f$ and $g$ as required in Definition \ref{funcM} to compute $f^\otimes(M_1,\ldots,M_k)$ and $g^\otimes(M_1,\ldots,M_k)$, then $\lambda P+\mu Q$ is an interpolation polynomial of $\lambda f+\mu g$ because partial derivatives are linear.

So we have
\begin{align*}(\lambda f+\mu g)^\otimes(M_1,\ldots,M_k)&=(\lambda P+\mu Q)^\otimes(M_1,\ldots,M_k)\\
&=\lambda P^\otimes(M_1,\ldots,M_k)+\mu Q^\otimes(M_1,\ldots,M_k)\\
&=\lambda f^\otimes(M_1,\ldots,M_k)+\mu g^\otimes(M_1,\ldots,M_k),\end{align*}
where the second equality trivially follows from Definition \ref{polyM}.
\end{proof}

Another trivial property is the nice behaviour of $f^\otimes(M_1,\ldots,M_k)$ with respect to transpositions.
\begin{prop}\label{transp}Let $k\in\N^*$, and let $M_1,\ldots,M_k$ be endomorphisms of the finite dimensional $\C$-vector spaces $E_1,\ldots,E_k$.
Let $f$ be a function from $\C^k$ to $\C$ such that $f^\otimes(M_1,\ldots,M_k)$ is well defined.
Let $1\leq l\leq k$.
The transposition of $M_l$ is the endomorphism of $E_l^*$ defined by
$$\langle M_l^T(\zeta),v\rangle=\langle\zeta,M_l(v)\rangle,$$
for any $\zeta$ in $E_l^*$ and $v\in E_l$.
More explicitely, we can write $(M_l^T)_{j_l}{}^{i_l}=(M_l)^{i_l}{}_{j_l}$.

Then we have:
\begin{multline*}f^\otimes(M_1,\ldots,M_{l-1},M_l^T,M_{l+1},\ldots,M_k)^{i_1}{}_{j_1}\ldots^{i_{l-1}}{}_{j_{l-1}j_l}{}^{i_li_{l+1}}{}_{j_{l+1}}\ldots^{i_k}{}_{j_k}=\\f^\otimes(M_1,\ldots,M_k)^{i_1}{}_{j_1}\ldots^{i_k}{}_{j_k}.\end{multline*}
\end{prop}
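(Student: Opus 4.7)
The plan is to reduce the claim to the monomial case, where it becomes a direct consequence of the identity $(M^T)^n = (M^n)^T$, and then extend by linearity first to polynomials and finally to general $f$ via the interpolation polynomial.

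First I would note the crucial fact that $M_l$ and $M_l^T$ share the same minimal polynomial, hence the same eigenvalues with the same multiplicities in the minimal polynomial. Consequently, the regularity hypotheses required in Definition \ref{funcM} to define $f^\otimes(M_1,\ldots,M_{l-1},M_l^T,M_{l+1},\ldots,M_k)$ are literally identical to those required to define $f^\otimes(M_1,\ldots,M_k)$, and any polynomial $P$ that serves as an interpolation polynomial for the latter serves as one for the former. Thus it suffices to prove the transposition identity when $f=P$ is a polynomial.

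By linearity in $P$, the polynomial case reduces to the monomial case $P(x_1,\ldots,x_k) = x_1^{\alpha_1}\cdots x_k^{\alpha_k}$. Here Definition \ref{polyM} gives
\[
P^\otimes(M_1,\ldots,M_{l-1},M_l^T,M_{l+1},\ldots,M_k) = M_1^{\alpha_1}\otimes\cdots\otimes (M_l^T)^{\alpha_l}\otimes\cdots\otimes M_k^{\alpha_k}.
\]
Since $(M_l^T)^{\alpha_l} = (M_l^{\alpha_l})^T$, taking coordinates yields
\[
\bigl((M_l^T)^{\alpha_l}\bigr)_{j_l}{}^{i_l} = \bigl(M_l^{\alpha_l}\bigr)^{i_l}{}_{j_l},
\]
so that the coefficient on the left-hand side of the claimed identity, with indices $(\ldots,{}_{j_l}{}^{i_l},\ldots)$, matches the coefficient of $P^\otimes(M_1,\ldots,M_k)$ with indices $(\ldots,{}^{i_l}{}_{j_l},\ldots)$. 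This establishes the monomial case componentwise.

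I do not foresee any real obstacle here: the proposition is essentially tautological once one observes the invariance of the interpolation data under $M_l\mapsto M_l^T$ and uses $(M^T)^n=(M^n)^T$. The only point requiring a moment of care is the bookkeeping of the index positions in the statement, to confirm that the swap $({}^{i_l}{}_{j_l}) \leftrightarrow ({}_{j_l}{}^{i_l})$ is precisely what transposition of the $l$-th factor produces in the tensor-index notation.
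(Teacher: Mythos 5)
Your proof is correct and follows essentially the same route as the paper's: reduce to a common interpolation polynomial using the fact that $M_l$ and $M_l^T$ have the same minimal polynomial, then settle the monomial case via $(M^T)^a=(M^a)^T$. You spell out the index bookkeeping a bit more explicitly than the paper does, but the argument is the same.
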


\begin{rmk}In particular, if the matrix of $M_l$ is symmetric in some basis $\mathcal{B}$ of $E_l$, then the coefficients of $f^\otimes(M_1,\ldots,M_k)$ in a product basis where the one chosen for $E_l$ is $\mathcal{B}$ are invariant by swapping the corresponding indexes.
\end{rmk}

\begin{proof}The matrices $M_l$ and $M_l^T$ have the same eigenvalues and the same minimal polynomial, thus if $P$ is a suitable interpolation polynomial so that $f^\otimes(M_1,\ldots,M_k)=P^\otimes(M_1,\ldots,M_k)$, then we have $f^\otimes(M_1,\ldots,M_{l-1},M_l^T,M_{l+1},\ldots,M_k)=P^\otimes(M_1,\ldots,M_{l-1},M_l^T,M_{l+1},\ldots,M_k)$.
Proposition \ref{transp} for polynomials trivially follows from the fact that $(M^T)^a=(M^a)^T$.\end{proof}

The tensor $f^\otimes(M_1,\ldots,M_k)$ can also be seen as an endomorphism on the space $E_1\otimes E_2\otimes\ldots\otimes E_k$.
This allows to take products of such tensors, or to apply functions of several variables on them.

\begin{thm}\label{product}Let $M_1,\ldots,M_k$ be endomorphisms of finite dimensional $\C$-vector spaces.
Let $f_1$ and $f_2$ be two functions from $\C^k$ to $\C$.
As in Definition \ref{funcM}, we set $e_l$ the number of different eigenvalues of $M_l$, for $1\leq l\leq k$, $\lambda_{lm}$ the eigenvalues of $M_l$ and $r_{lm}$ their multiplicity as roots of the minimal polynomial of $M_l$, for $1\leq m\leq e_l$.
Assume that $f_1$ and $f_2$ are holomorphic with respect to all the variables $x_l$ such that $r_{lm_l}\geq 2$ near $(\lambda_{1m_1},\ldots,\lambda_{km_k})$.
Then we can define $\bar{M}_1=f_1^\otimes(M_1,\ldots,M_k)$ and $\bar{M}_2=f_2^\otimes(M_1,\ldots,M_k)$ and see them as endomorphisms of $E_1\otimes\ldots\otimes E_k$.
Then we have
$$\bar{M}_1\bar{M}_2=g^\otimes(M_1,\ldots,M_k),$$
with $g(x_1,\ldots,x_k)=f_1(x_1,\ldots,x_k)f_2(x_1,\ldots,x_k)$.
\end{thm}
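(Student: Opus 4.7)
The plan is to first reduce to the polynomial case by finding appropriate interpolation polynomials, then handle polynomials by bilinearity and a monomial computation.

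First I would note that $g=f_1f_2$ satisfies the regularity hypothesis of Definition \ref{funcM} because the product of two holomorphic functions is holomorphic, so $g^\otimes(M_1,\ldots,M_k)$ is well-defined. Let $P_1$ and $P_2$ be polynomials interpolating $f_1$ and $f_2$, respectively, in the sense required by Definition \ref{funcM}, so that $\bar{M}_i=P_i^\otimes(M_1,\ldots,M_k)$ for $i=1,2$. The key claim is that $P_1P_2$ is then a valid interpolation polynomial for $g$: for any eigenvalue point $\vec{\lambda}_{\vec{m}}=(\lambda_{1m_1},\ldots,\lambda_{km_k})$ and any multi-index $\vec{j}$ with $0\leq j_l<r_{lm_l}$, the multivariate Leibniz rule gives
$$\left[\prod_l\partial_l^{j_l}\right](P_1P_2)(\vec{\lambda}_{\vec{m}})=\sum_{\vec{j}\,'\leq\vec{j}}\prod_l\binom{j_l}{j'_l}\left[\prod_l\partial_l^{j'_l}\right]P_1(\vec{\lambda}_{\vec{m}})\cdot\left[\prod_l\partial_l^{j_l-j'_l}\right]P_2(\vec{\lambda}_{\vec{m}}),$$
and for every $\vec{j}\,'\leq\vec{j}$ both $\vec{j}\,'$ and $\vec{j}-\vec{j}\,'$ lie componentwise strictly below $(r_{lm_l})_l$, so each factor equals the corresponding partial derivative of $f_1$ or $f_2$. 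Applying the same Leibniz identity to $f_1f_2$ therefore yields the same value. Hence $(P_1P_2)^\otimes(M_1,\ldots,M_k)=g^\otimes(M_1,\ldots,M_k)$ by Definition \ref{funcM}.

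It thus suffices to prove the statement for two polynomials: $(P_1P_2)^\otimes(M_1,\ldots,M_k)=P_1^\otimes(M_1,\ldots,M_k)\cdot P_2^\otimes(M_1,\ldots,M_k)$, the product on the right being composition of endomorphisms of $E_1\otimes\cdots\otimes E_k$. By expanding $P_1$ and $P_2$ in monomials and using bilinearity of both the polynomial product and of the composition of endomorphisms, we reduce further to the case where $P_1$ and $P_2$ are single monomials $x_1^{\alpha_1}\cdots x_k^{\alpha_k}$ and $x_1^{\beta_1}\cdots x_k^{\beta_k}$.

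For two such monomials, Definition \ref{polyM} gives $P_1^\otimes(M_1,\ldots,M_k)=M_1^{\alpha_1}\otimes\cdots\otimes M_k^{\alpha_k}$ and similarly for $P_2$, and the identity $(A_1\otimes\cdots\otimes A_k)(B_1\otimes\cdots\otimes B_k)=(A_1B_1)\otimes\cdots\otimes(A_kB_k)$ for operators on a tensor product yields
$$P_1^\otimes(M_1,\ldots,M_k)\cdot P_2^\otimes(M_1,\ldots,M_k)=M_1^{\alpha_1+\beta_1}\otimes\cdots\otimes M_k^{\alpha_k+\beta_k}=(P_1P_2)^\otimes(M_1,\ldots,M_k),$$
which settles the monomial case and, by the reductions above, the theorem. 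The only nontrivial step is verifying that $P_1P_2$ is an interpolation polynomial for $g$, which is handled by the Leibniz computation; the rest is essentially bookkeeping.
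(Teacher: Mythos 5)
Your proposal is correct and follows essentially the same route as the paper: replace $f_1,f_2$ by interpolation polynomials $P_1,P_2$, verify via the multivariate Leibniz rule that $P_1P_2$ interpolates $g=f_1f_2$ to the required order (using that $j'_l\leq j_l<r_{lm_l}$ forces both $j'_l$ and $j_l-j'_l$ to stay below $r_{lm_l}$), and check multiplicativity on monomials. The only cosmetic difference is that you invoke the operator identity $(A_1\otimes\cdots\otimes A_k)(B_1\otimes\cdots\otimes B_k)=(A_1B_1)\otimes\cdots\otimes(A_kB_k)$ where the paper carries out the same computation in index notation.
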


\begin{rmk}\label{commut1}In particular, $\bar{M}_1$ and $\bar{M}_2$ commute, since one does get the same function $g$ by swapping $f_1$ and $f_2$.
\end{rmk}

\begin{proof} We first notice that the function $g$ is regular enough so that $g^\otimes(M_1,\ldots,M_k)$ is well defined, so the statement of Proposition \ref{product} has a sense.

Let $P_1$ and $P_2$ be suitable interpolation polynomials of $f_1$ and $f_2$, in the sense that all the partial derivatives at $(\lambda_{1m_1},\ldots,\lambda_{km_k})$ such that one derives less than $r_{lm_l}$ times with respect to $x_l$ are equal for $f_i$ and $P_i$.

Then according to Definition \ref{funcM}, one has $f_i^\otimes(M_1,\ldots,M_k)=P_i^\otimes(M_1,\ldots,M_k)$.
The polynomials $P_1$ and $P_2$ can be written as
$$P_i=\sum_{\alpha\in F_i}a_{i\alpha}x_1^{\alpha_1}\ldots x_k^{\alpha_k},$$
with $F_i$ a finite subset of $\N^k$ and $a_{i\alpha}$ the complex coefficients of the polynomial $P_i$.
Then we have:
\begin{align*}(\bar{M}_1\bar{M}_2)^{i_1}{}_{j_1}\ldots^{i_k}{}_{j_k}&=(\bar{M}_1)^{i_1}{}_{m_1}\ldots^{i_k}{}_{m_k}(\bar{M}_2)^{m_1}{}_{j_1}\ldots^{m_k}{}_{j_k}\\
&=\sum_{\alpha\in F_1}a_{1\alpha} (M_1^{\alpha_1})^{i_1}{}_{m_1}\ldots(M_k^{\alpha_k})^{i_k}{}_{m_k}\sum_{\beta\in F_2}a_{2\beta} (M_1^{\beta_1})^{m_1}{}_{j_1}\ldots(M_k^{\beta_k})^{m_k}{}_{j_k}\\
&=\sum_{\alpha\in F_1,\beta\in F_2}a_{1\alpha}a_{2\beta}(M_1^{\alpha_1+\beta_1})^{i_1}{}_{j_1}\ldots(M_k^{\alpha_k+\beta_k})^{i_k}{}_{j_k}\\
&=(P_1P_2)^\otimes(M_1,\ldots,M_k)^{i_1}{}_{j_1}\ldots^{i_k}{}_{j_k}.
\end{align*}
Finally, we have
\begin{multline*}\partial_1^{a_1},\ldots,\partial_k^{a_k}(P_1P_2)(x_1,\ldots,x_k)=\\
\sum_{\substack{0\leq b_1\leq a_1\\\ldots\\0\leq b_k\leq a_k}}\binom{a_1}{b_1}\ldots\binom{a_k}{b_k}\partial_1^{b_1}\ldots\partial_k^{b_k}P_1(x_1,\ldots,x_k)\partial_1^{a_1-b_1}\ldots\partial_k^{a_k-b_k}P_2(x_1,\ldots,x_k).\end{multline*}
the same formula holds when we replace $P_1$ with $f_1$, $P_2$ with $f_2$, $P_1P_2$ with $g$, and $x_l$ with $\lambda_{lm_l}$, for all $k$-tuple $(m_1,\ldots,m_k)$ such that $1\leq m_l\leq e_l$, provided $0\leq a_l<r_{lm_l}$ for all $1\leq l\leq k$.
Thus we have
$$\partial_1^{a_1}\ldots\partial_k^{a_k}g(\lambda_{1m_1},\ldots,\lambda_{km_k})=\partial_1^{a_1}\ldots\partial_k^{a_k}(P_1P_2)(\lambda_{1m_1},\ldots,\lambda_{km_k}),$$
for all $k$-tuple $(a_1,\ldots,a_k)$ such that $0\leq a_l<r_{lm_l}$.
So we get
$$\bar{M}_1\bar{M}_2=(P_1P_2)^\otimes(M_1,\ldots,M_k)=g^\otimes(M_1,\ldots,M_k),$$
as stated.\end{proof}

\begin{thm}\label{compose} Let $r\in\N^*$, and $k_q\in\N^*$ for $1\leq q\leq r$.
Let $E_{ql}$ be a family of finite dimensional $\C$-vector spaces, where $1\leq l\leq r$ and $1\leq l\leq k_q$, and let $M_{ql}$ be and endomorphism of $E_{ql}$.

We set $e_{ql}$ the number of different eigenvalues of $M_{ql}$, $\lambda_{qlm}$ these eigenvalues, where $1\leq m\leq e_{ql}$, and $r_{qlm}$ the multiplicity of the root $\lambda_{qlm}$ in the minimal polynomial of $M_{ql}$.
For $1\leq q\leq r$, let $f_q$ be a function from $\C^{k_q}$ to $\C$, such that $f_q$ is holomorphic with respect to all the variables $x_l$ such that $r_{qlm_l}\geq2$ near $(\lambda_{q1m_1},\ldots,\lambda_{qlm_l},\ldots,\lambda_{qk_qm_{k_q}})$, for every $k_q$-tuple $(m_1,\ldots,m_{k_q})$ with $1\leq m_l\leq e_{ql}$.

Let $\bar{e}_q$ be the number of different values that $f_q(\lambda_{q1m_1},\ldots,\lambda_{qk_qm_{k_q}})$ takes, and $\mu_{qp}$ be these values for $1\leq p\leq\bar{e}_q$.

We set $\bar{r}_{qp}\coleq\max\{1+\sum_{l=1}^{k_q}(r_{qlm_l}-1),1\leq m_l\leq e_{ql}|f_q(\lambda_{q1m_1},\ldots,\lambda_{qk_qm_{k_q}})=\mu_{qp}\}$.

We set $\bar{M}_q=f_q^\otimes(M_{q1},\ldots,M_{qk_q})$, seen as an endomorphism of $\bar{E}_{q}\coleq E_{q1}\otimes\ldots\otimes E_{qk_q}$.

Let $g$ be a function from $\C^r$ to $\C$, such that for every $r$-tuple $(p_1,\ldots,p_r)$, $g$ is holomorphic with respect to all the variables $x_q$ such that $\bar{r}_{qp_q}\geq2$, near the point $(\mu_{1p_1},\ldots,\mu_{rp_r})$.

Then for all $1\leq q\leq k$, and all $1\leq p\leq \bar{e}_q$, the $\mu_{qp}$'s are the eigenvalues of $\bar{M}_q$, and the multiplicity of the root $\mu_{qp}$ in the minimal polynomial of $\bar{M}_q$ is at most $\bar{r}_qp$.

So $g^\otimes(\bar{M}_1,\ldots,\bar{M}_r)$ is well defined and furthermore, we have:
$$g^\otimes(\bar{M}_1,\ldots,\bar{M}_r)=h(M_{11},\ldots,M_{rk_r}),$$
where $h$ is the function of $\sum_{q=1}^rk_q$ variables defined by
$$h(x_{11},\ldots,x_{rk_r})\coleq g(f_1(x_{11},\ldots,x_{1k_1}),\ldots,f_r(x_{r1},\ldots,x_{rk_r})).$$
\end{thm}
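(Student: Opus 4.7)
The plan is to reduce everything to the polynomial case through Definition \ref{funcM}, using the multivariate chain rule to transport interpolation data through the composition. I would pick an interpolation polynomial $Q$ of $g$ and interpolation polynomials $P_q$ of $f_q$ satisfying the conditions of Definition \ref{funcM} (the former requires the spectral description of $\bar{M}_q$, which is the content of the first half of the statement), and form the composite polynomial $H(x_{11},\ldots,x_{rk_r})\coleq Q(P_1(x_{11},\ldots,x_{1k_1}),\ldots,P_r(x_{r1},\ldots,x_{rk_r}))$. The goal is then to show that both $g^\otimes(\bar{M}_1,\ldots,\bar{M}_r)=Q^\otimes(\bar{M}_1,\ldots,\bar{M}_r)$ and $h^\otimes(M_{11},\ldots,M_{rk_r})=H^\otimes(M_{11},\ldots,M_{rk_r})$ coincide with the single polynomial tensor $H^\otimes(M_{11},\ldots,M_{rk_r})$.

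\textbf{Spectrum of $\bar{M}_q$.} After a Jordan conjugation (Lemma \ref{conjug}), Example \ref{jordan} shows that the diagonal entries of $\bar{M}_q$ are the values $f_q(\lambda_{q1c_1},\ldots,\lambda_{qk_qc_{k_q}})$, so the eigenvalues of $\bar{M}_q$ are exactly the $\mu_{qp}$. For the minimal polynomial bound I would exhibit the explicit annihilator $\bar{P}_q(x)\coleq\prod_p(x-\mu_{qp})^{\bar{r}_{qp}}$. Iterating Theorem \ref{product} and using linearity yields $\bar{P}_q(\bar{M}_q)=(\bar{P}_q\circ f_q)^\otimes(M_{q1},\ldots,M_{qk_q})$, which vanishes by Proposition \ref{zero} provided every partial of $\bar{P}_q\circ f_q$ of orders $(j_l)$ with $j_l<r_{qlm_l}$ is zero at each $(\lambda_{q1m_1},\ldots,\lambda_{qk_qm_{k_q}})$. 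But at such a point $f_q$ equals some $\mu_{qp}$ and $\bar{P}_q$ has a zero of order $\bar{r}_{qp}$ there, so by Faà di Bruno any partial of $\bar{P}_q\circ f_q$ of total order $<\bar{r}_{qp}$ automatically vanishes; and $\sum_l j_l\leq\sum_l(r_{qlm_l}-1)\leq\bar{r}_{qp}-1$ by the very definition of $\bar{r}_{qp}$.

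\textbf{Polynomial case.} When $g$ and the $f_q$ are polynomials, write $g=\sum_\beta b_\beta y_1^{\beta_1}\cdots y_r^{\beta_r}$. Iterated application of Theorem \ref{product} gives $\bar{M}_q^{\beta_q}=(f_q^{\beta_q})^\otimes(M_{q1},\ldots,M_{qk_q})$, and since the $\bar{M}_q$ act on disjoint tensor factors $\bar{E}_q$ the product $\bar{M}_1^{\beta_1}\otimes\cdots\otimes\bar{M}_r^{\beta_r}$ distributes into a sum of pure tensors of monomials in the $M_{ql}$; the coefficient-by-coefficient comparison with $h^\otimes(M_{11},\ldots,M_{rk_r})$ is then immediate from Definition \ref{polyM}.

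\textbf{General case.} It only remains to verify that $H$ satisfies the interpolation conditions of Definition \ref{funcM} with respect to $h$. The required derivative $\left[\prod_{q,l}\partial_{ql}^{j_{ql}}\right]H$ at the tuple $(\lambda_{q1m_1},\ldots)_{q,l}$, with $j_{ql}<r_{qlm_l}$, expands by the multivariate Faà di Bruno formula into a universal polynomial in the partials $\partial^a Q$ evaluated at $(\mu_{1p_1},\ldots,\mu_{rp_r})$ (with $a_q\leq\sum_l j_{ql}<\bar{r}_{qp_q}$) and the partials $\partial^\gamma P_q$ at the eigenvalue tuple (with $\gamma_l\leq j_{ql}<r_{qlm_l}$); every such order lies inside the matching range of $Q$ against $g$ and of $P_q$ against $f_q$, so the same expansion computes $\left[\prod_{q,l}\partial_{ql}^{j_{ql}}\right]h$ at that point. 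Consequently $h^\otimes(M_{11},\ldots,M_{rk_r})=H^\otimes(M_{11},\ldots,M_{rk_r})=Q^\otimes(\bar{M}_1,\ldots,\bar{M}_r)=g^\otimes(\bar{M}_1,\ldots,\bar{M}_r)$, the middle equality being the polynomial case. The main obstacle is just the bookkeeping inside this Faà di Bruno identification and in the parallel vanishing argument of the first step; everything else is formal.
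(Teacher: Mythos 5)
Your proposal is correct and takes essentially the same route as the paper's own proof: the same explicit annihilating polynomial $\prod_p(x-\mu_{qp})^{\bar{r}_{qp}}$ handled through Theorem \ref{product}, the Faa di Bruno formula and Proposition \ref{zero}, followed by the same Faa di Bruno matching of the interpolation data of the composite against $h$. The only cosmetic differences are that you identify the eigenvalues of $\bar{M}_q$ via the Jordan form (Example \ref{jordan}) where the paper uses tensor products of eigenvectors, and that you compose $Q$ with the interpolation polynomials $P_q$ rather than with the functions $f_q$ themselves.
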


\begin{proof}Let us prove that the $\mu_{qp}$'s are the eigenvalues of $\bar{M}_q$ and have multiplicity at most $\bar{r}_{pq}$ in the minimal polynomial of $\bar{M}_q$.

Let $1\leq q \leq r$.
Let $P(x_1,\ldots,x_{k_q})$ be a suitable interpolation polynomial of $f_q$, such that $\bar{M}_q=P^\otimes(M_{q1},\ldots,M_{qk_q})$.
For any $m_1,\ldots,m_{k_q}$, if $v_1,\ldots,v_{k_q}$ are eigenvectors of $M_{q1},\ldots,M_{qk_q}$ for the eigenvalues $\lambda_{q1m_1},\ldots,\lambda_{qk_qm_{k_q}}$, then $v_1\otimes\ldots\otimes v_{k_q}$ is an eigenvector of $\bar{M}_q$ for the eigenvalue $P(\lambda_{q1m_1},\ldots,\lambda_{qk_qm_{k_q}})=f_q(\lambda_{q1m_1},\ldots,\lambda_{qk_qm_{k_q}})$, so the $\mu_{qp}$'s are eigenvalues of $\bar{M}_q$.
Let $Q(x)=\prod_{p=1}^{\bar{e}_q}(x-\mu_{qp})^{\bar{r}_{qp}}$.
It remains to check that $Q(\bar{M}_q)=0$.

We set $R(x_1,\ldots,x_{k_q})=Q(P(x_1,\ldots,x_{k_q}))$.
Then it easily follows from Theorem \ref{product} that $Q(\bar{M}_q)=R^\otimes(M_{q1},\ldots,M_{qk_q})$.
For $1\leq l\leq k_q$, let $1\leq m_l\leq e_{ql}$, and $0\leq j_l<r_{qlm_l}$.
One has, according to the Faa di Bruno formula,
$$\left[\prod_{l=1}^{k_q}\partial_l^{j_l}\right]R(\lambda_{q1m_1},\ldots,\lambda_{qk_qm_{k_q}})=\sum_{j=0}^{\sum_{l=1}^{k_q}j_l}Q^{(j)}(P(\lambda_{q1m_1},\ldots,\lambda_{qk_qm_{k_q}}))A_j,$$
where the term $A_j$ can be written as
$$A_j=\sum_{\substack{n\leq j\\(i_1,\ldots,i_n)\in(\N^{k_q})^n\\0\prec i_1\prec i_2\prec\ldots\prec i_n\\(a_1,\ldots,a_n)\in\N^{*n}\\a_1+\ldots+a_n=j\\a_1i_1+\ldots+a_ni_n=(j_1,\ldots,j_{k_q})}}\left(\frac{\prod_{l=1}^{k_q}j_l!}{\prod_{k=1}^{n}a_k!\prod_{l=1}^{k_q}(i_{kl}!)^{a_k}}\right)\prod_{k=1}^n\left(\left[\prod_{l=1}^{k_q}\partial_l^{i_{kl}}\right]P(\lambda_{q1m_1},\ldots,\lambda_{qk_qm_{k_q}})\right)^{a_k},$$
where $\preceq$ can be any total order on $\N^{k_q}$ such that $0$ is its minimal element (one can take the lexicographical order, for example).

But actually the value of $A_j$ does not matter, since $Q^{(j)}(P(\lambda_{q1m_1},\ldots,\lambda_{qk_qm_{k_q}}))=0$, because $P(\lambda_{q1m_1},\ldots,\lambda_{qk_qm_{k_q}})$ is equal to some $\mu_{qp}$ such that $\bar{r}_{qp}>j$.
Thus, we have $\left[\prod_{l=1}^{k_q}\partial_l^{j_l}\right]R(\lambda_{q1m_1},\ldots,\lambda_{qk_qm_{k_q}})=0$ and then by Proposition \ref{zero}, one has $R^\otimes(M_{q1},\ldots,M_{qk_q})$, and hence $Q(\bar{M}_q)=0$ as stated.

Now let $P$ be a suitable interpolation polynomial of $g$, i.e. such that
$$\left[\prod_{q=1}^r\partial_q^{j_q}\right]P(\mu_{1p_1},\ldots,\mu_{rp_r})=\left[\prod_{q=1}^r\partial_q^{j_q}\right]g(\mu_{1p_1},\ldots,\mu_{rp_r}),$$
for all $p_1,\ldots,p_r$ and $j_1,\ldots,j_r$ satisfying $1\leq p_q\leq \bar{e}_q$ and $0\leq j_q<\bar{r}_{qp_q}$.
Then we have $P^\otimes(\bar{M}_1,\ldots,\bar{M}_r)=g^\otimes(\bar{M}_1,\ldots,\bar{M}_r)$.
It easily follows from Definition \ref{polyM} and Theorem \ref{product} that
$$P^\otimes(\bar{M}_1,\ldots,\bar{M}_r)=H^\otimes(M_{11},\ldots,M_{rk_r}),$$
where $H$ is the function of $\sum_{q=1}^rk_q$ variables defined by
$$H(x_{11},\ldots,x_{rk_r})=P(f_1(x_{11},\ldots,x_{1k_1}),\ldots,f_r(x_{r1},\ldots,x_{rk_r})).$$
To complete the proof of the theorem, it remains to check that
$$\left[\prod_{q=1}^r\prod_{l=1}^{k_q}\partial_{ql}^{j_{ql}}\right]H(\lambda_{11m_{11}},\ldots,\lambda_{rk_rm_{rk_r}})=\left[\prod_{q=1}^r\prod_{l=1}^{k_q}\partial_{ql}^{j_{ql}}\right]h(\lambda_{11m_{11}},\ldots,\lambda_{rk_rm_{rk_r}}),$$
for all $1\leq m_{ql}\leq e_{ql}$ and $0\leq j_{ql}<r_{qlm_{ql}}$.
This is easily done by using the Faa di Bruno formula and the fact that
$$\left[\prod_{q=1}^r\partial_q^{j_q}\right]P(\mu_{1p_1},\ldots,\mu_{rp_r})=\left[\prod_{q=1}^r\partial_q^{j_q}\right]g(\mu_{1p_1},\ldots,\mu_{rp_r}),$$
for all $1\leq p_q\leq\bar{e}_q$ and $0\leq j_q<\bar{r}_{qp_q}$.
\end{proof}

In some cases, contractions of the tensor $f^\otimes(M_1,\ldots,M_k)$ have a simplified expression.

\begin{thm}\label{contr1}Let $M_1,\ldots,M_k$ be endomorphisms of finite dimensional $\C$-vector spaces.
Let $f$ be a function from $\C^k$ to $\C$, such that $f^\otimes(M_1,\ldots,M_k)$ is well defined.
Let $1\leq p\leq k$.
Then we have the following equality:
\begin{multline*}f^\otimes(M_1,\ldots,M_k)^{i_1}{}_{j_1}\ldots^{i_{p-1}}{}_{j_{p-1}}{}^{i_p}{}_{i_p}{}^{i_{p+1}}{}_{j_{p+1}}\ldots^{i_k}{}_{j_k}=\\
g^\otimes(M_1,\ldots,M_{p-1},M_{p+1},\ldots,M_k)^{i_1}{}_{j_1}\ldots^{i_{p-1}}{}_{j_{p-1}}{}^{i_{p+1}}{}_{j_{p+1}}\ldots^{i_k}{}_{j_k},\end{multline*}
where $g$ is the function of $k-1$ variables defined by
$$g(x_1,\ldots,x_{p-1},x_{p+1},\ldots,x_k)=\sum_{m=1}^{e_p}s_{pm}f(x_1,\ldots,x_{p-1},\lambda_{pm},x_{p+1},\ldots,x_k),$$
where the $\lambda_{pm}$, for $1\leq m\leq e_p$ are all the different eigenvalues of $M_p$ and $s_{pm}$ is the multiplicity of the eigenvalue $\lambda_{pm}$, i.e. its multiplicity as root of the characteristic polynomial of $M_p$.
\end{thm}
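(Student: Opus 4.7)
The plan is to first treat the polynomial case by a direct calculation on the tensor formula of Definition \ref{polyM}, then transfer to general $f$ via the interpolation mechanism of Definition \ref{funcM}.

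First I would write a suitable interpolation polynomial $P$ for $f$, so that $f^\otimes(M_1,\ldots,M_k)=P^\otimes(M_1,\ldots,M_k)$, and expand $P=\sum_\alpha c_\alpha x_1^{\alpha_1}\cdots x_k^{\alpha_k}$. Contracting the $p$-th pair of indices on each monomial $M_1^{\alpha_1}\otimes\cdots\otimes M_k^{\alpha_k}$ produces the trace factor $\Tr(M_p^{\alpha_p})$ multiplying the remaining tensor $M_1^{\alpha_1}\otimes\cdots\otimes\widehat{M_p^{\alpha_p}}\otimes\cdots\otimes M_k^{\alpha_k}$. Using the fact that the eigenvalues of $M_p^{\alpha_p}$ (counted with algebraic multiplicity) are the $\lambda_{pm}^{\alpha_p}$ with multiplicities $s_{pm}$, one has $\Tr(M_p^{\alpha_p})=\sum_m s_{pm}\lambda_{pm}^{\alpha_p}$. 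Collecting terms and swapping the sums yields
$$\sum_{m=1}^{e_p}s_{pm}\,Q_m^\otimes(M_1,\ldots,\widehat{M_p},\ldots,M_k),\qquad Q_m(x_1,\ldots,\widehat{x_p},\ldots,x_k)\coleq P(x_1,\ldots,\lambda_{pm},\ldots,x_k).$$
By linearity of $\cdot^\otimes$ in the polynomial (trivially from Definition \ref{polyM}), this equals $\widetilde{g}^\otimes(M_1,\ldots,\widehat{M_p},\ldots,M_k)$ with $\widetilde{g}\coleq\sum_m s_{pm}Q_m$.

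Next I would argue that $\widetilde{g}$ is itself a suitable interpolation polynomial for the function $g$ with respect to the remaining matrices $M_1,\ldots,\widehat{M_p},\ldots,M_k$. For any admissible choice of indices $(m_l)_{l\ne p}$ and differentiation orders $(j_l)_{l\ne p}$ with $0\le j_l<r_{lm_l}$, one computes
$$\Bigl[\prod_{l\ne p}\partial_l^{j_l}\Bigr]\widetilde{g}(\lambda_{1m_1},\ldots,\widehat{\lambda_p},\ldots,\lambda_{km_k})=\sum_{m=1}^{e_p}s_{pm}\Bigl[\prod_{l\ne p}\partial_l^{j_l}\Bigr]P(\lambda_{1m_1},\ldots,\lambda_{pm},\ldots,\lambda_{km_k}),$$
and the interpolation condition satisfied by $P$ (applied with $m_p=m$ and $j_p=0$, which is always admissible since $r_{pm}\ge 1$) lets me replace $P$ by $f$ on the right, giving exactly $[\prod_{l\ne p}\partial_l^{j_l}]g(\lambda_{1m_1},\ldots,\widehat{\lambda_p},\ldots,\lambda_{km_k})$. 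Hence $\widetilde{g}^\otimes=g^\otimes$ on $(M_1,\ldots,\widehat{M_p},\ldots,M_k)$, and the theorem follows; as a side-check, the holomorphicity of $g$ in the variables $x_l$ ($l\ne p$) with $r_{lm_l}\ge 2$ that is needed to make $g^\otimes$ well-defined is inherited from the same property of $f$, since $g$ is a finite $\C$-linear combination of values of $f$.

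The main obstacle is not the calculation itself but the bookkeeping at the interpolation step: one must be careful that reading off $P$ only at the points $x_p=\lambda_{pm}$ and at derivative order $j_p=0$ is enough to guarantee the equality between $P$ and $f$ that is needed here, even though $P$ is built to interpolate $f$ to much higher order in $x_p$. Put differently, the $s_{pm}$ weights on the right-hand side come precisely from the algebraic multiplicities in $\Tr$, not from the minimal-polynomial multiplicities $r_{pm}$, so one has to resist the temptation to involve derivatives with respect to $x_p$ at all when checking the interpolation condition for $g$.
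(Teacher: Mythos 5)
Your proposal is correct and follows essentially the same route as the paper: reduce to an interpolation polynomial $P$, contract monomial by monomial using $\Tr(M_p^{\alpha_p})=\sum_m s_{pm}\lambda_{pm}^{\alpha_p}$, and then verify that the resulting polynomial $\sum_m s_{pm}P(\ldots,\lambda_{pm},\ldots)$ interpolates $g$ to the required order by reducing each derivative condition to the (order-zero in $x_p$) conditions already satisfied by $P$ against $f$. The paper phrases the first step as ``monomial, then linearity'' before introducing $P$, but the content is identical.
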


\begin{proof}In the simple case where $f(x_1,\ldots,x_k)$ is a monomial, the equality of Theorem \ref{contr1} easily follows from the classical fact that $\Tr(M^a)=\sum_{\lambda\textrm{ eigenvalue of }M}\lambda^a$ (where the same $\lambda$ appears several times in the sum if it is a multiple eigenvalue of $M$) for any square matrice $M$.
By linearity, the equality of Theorem \ref{contr1} extends to the case where $f$ is a polynomial.

For a more general $f$, we set $P$ a polynomial such that $\left[\prod_{l=1}^k\partial_l^{a_l}\right](f-P)(\lambda_{1m_1},\ldots,\lambda_{km_k})=0$ for any sequences $m_1,\ldots,m_k$ and $a_1,\ldots,a_k$ such that $1\leq m_l\leq e_l$ and $0\leq a_l<r_{lm_l}$, with $e_l$ the number of different eigenvalues of $M_l$, $\lambda_{lm}$ these eigenvalues for $1\leq m\leq e_l$ and $r_{lm}$ their multiplicities in the minimal polynomial of $M_l$.
Then we have $f^\otimes(M_1,\ldots,M_k)=P^\otimes(M_1,\ldots,M_k)$.
Thus we get
\begin{multline*}f^\otimes(M_1,\ldots,M_k)^{i_1}{}_{j_1}\ldots^{i_{p-1}}{}_{j_{p-1}}{}^{i_p}{}_{i_p}{}^{i_{p+1}}{}_{j_{p+1}}\ldots^{i_k}{}_{j_k}=\\
P^\otimes(M_1,\ldots,M_k)^{i_1}{}_{j_1}\ldots^{i_{p-1}}{}_{j_{p-1}}{}^{i_p}{}_{i_p}{}^{i_{p+1}}{}_{j_{p+1}}\ldots^{i_k}{}_{j_k}=\\
Q^\otimes(M_1,\ldots,M_{p-1},M_{p+1},\ldots,M_k)^{i_1}{}_{j_1}\ldots^{i_{p-1}}{}_{j_{p-1}}{}^{i_{p+1}}{}_{j_{p+1}}\ldots^{i_k}{}_{j_k},
\end{multline*}
with
$$Q(x_1,\ldots,x_{p-1},x_{p+1},\ldots,x_k)=\sum_{m=1}^{e_p}s_{pm}P(x_1,\ldots,x_{p-1},\lambda_{pm},x_{p+1},\ldots,x_k).$$

To conclude the proof, it remains to check that
$$\left.\left[\prod_{\substack{1\leq l\leq k\\l\neq p}}\left(\frac{\partial}{\partial x_l}\right)^{a_l}\right](g-Q)(x_1,\ldots,x_{p-1},x_{p+1},\ldots,x_k)\right|_{(x_1,\ldots,x_k)=(\lambda_{1m_1},\ldots,\lambda_{km_k})}=0.$$
This quantity is just
$$\sum_{m=1}^{e_p}\left[\prod_{\substack{1\leq l\leq k\\l\neq p}}\left(\frac{\partial}{\partial x_l}\right)^{a_l}\right](f-P)(\lambda_{1m_1},\ldots,\lambda_{p-1m_{p-1}},\lambda_{pm},\lambda_{p+1m_{p+1}},\ldots,\lambda_{km_k}),$$
which is trivially $0$.

So $Q^\otimes(M_1,\ldots,M_{p-1},M_{p+1},\ldots,M_{k})=g^\otimes(M_1,\ldots,M_{p-1},M_{p+1},\ldots,M_{k})$ and the theorem is proved.
\end{proof}

\begin{thm}\label{contr2}Let $M_1,\ldots,M_k$ be endomorphisms of finite dimensional $\C$-vector spaces $E_1,\ldots,E_k$.
Assume that for two indexes $1\leq p<q\leq k$, we have $E_p=E_q$ and $M_p=M_q$.
Let $f$ be a function from $\C^k$ to $\C$, such that $f^\otimes(M_1,\ldots,M_k)$ is well defined.
Then we have:
$$\begin{array}{l}f^\otimes(M_1,\ldots,M_k)^{i_1}{}_{j_1}\ldots^{i_{p-1}}{}_{j_{p-1}}{}^i{}_a{}^{i_{p+1}}{}_{j_{p+1}}\ldots^{i_{q-1}}{}_{j_{q-1}}{}^a{}_j{}^{i_{q+1}}{}_{j_{q+1}}\ldots^{i_k}{}_{j_k}=\\
f^\otimes(M_1,\ldots,M_k)^{i_1}{}_{j_1}\ldots^{i_{p-1}}{}_{j_{p-1}}{}^a{}_j{}^{i_{p+1}}{}_{j_{p+1}}\ldots^{i_{q-1}}{}_{j_{q-1}}{}^i{}_a{}^{i_{q+1}}{}_{j_{q+1}}\ldots^{i_k}{}_{j_k}=\\
g^\otimes(M_1,\ldots,M_{q-1},M_{q+1},\ldots,M_k)^{i_1}{}_{j_1}\ldots^{i_{p-1}}{}_{j_{p-1}}{}^i{}_j{}^{i_{p+1}}{}_{j_{p+1}}\ldots^{i_{q-1}}{}_{j_{q-1}}{}^{i_{q+1}}{}_{j_{q+1}}\ldots^{i_k}{}_{j_k},\end{array}$$
where the function $g$ of $k-1$ variables is defined by
$$g(x_1,\ldots,x_{q-1},x_{q+1},\ldots,x_k)=f(x_1,\ldots,x_{q-1},x_p,x_{q+1},\ldots,x_k).$$
\end{thm}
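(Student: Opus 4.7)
Following the same strategy as in the proofs of Theorems~\ref{contr1} and~\ref{product}, I would first establish the identity for polynomials by a direct computation on monomials, extended by linearity, and then reduce the general case to the polynomial one via an interpolation polynomial.

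For a monomial $f(x_1,\ldots,x_k) = x_1^{\alpha_1}\cdots x_k^{\alpha_k}$, the tensor $f^\otimes(M_1,\ldots,M_k)$ is $M_1^{\alpha_1}\otimes\cdots\otimes M_k^{\alpha_k}$. The first contraction in the theorem amounts to $\sum_a (M_p^{\alpha_p})^i{}_a (M_q^{\alpha_q})^a{}_j = (M_p^{\alpha_p}M_q^{\alpha_q})^i{}_j$ at the combined slot $p$, and the second to $(M_q^{\alpha_q}M_p^{\alpha_p})^i{}_j$. Both coincide with $(M_p^{\alpha_p+\alpha_q})^i{}_j$ because $M_p = M_q$ and powers of a single matrix commute. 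Since $g$ is obtained from $f$ by the substitution $x_q \mapsto x_p$, it is the monomial having exponent $\alpha_p + \alpha_q$ at position $p$, so $g^\otimes(M_1,\ldots,M_{q-1},M_{q+1},\ldots,M_k)$ is exactly the tensor we obtained, and linearity in the coefficients extends the three-way equality to every polynomial $f$.

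For general $f$, choose an interpolation polynomial $P$ with $f^\otimes(M_1,\ldots,M_k) = P^\otimes(M_1,\ldots,M_k)$, and set $G(x_1,\ldots,x_{q-1},x_{q+1},\ldots,x_k) \coleq P(x_1,\ldots,x_{q-1},x_p,x_{q+1},\ldots,x_k)$. By the polynomial case, both contractions on the left equal $G^\otimes(M_1,\ldots,M_{q-1},M_{q+1},\ldots,M_k)$, so it remains to check that $G$ satisfies the interpolation conditions for $g$ so that $G^\otimes(\ldots)=g^\otimes(\ldots)$. The only non-trivial derivatives to control are those in $x_p$, where the chain rule yields
\[ \partial_{x_p}^{j_p} g = \sum_{s=0}^{j_p}\binom{j_p}{s}(\partial_p^s\partial_q^{j_p-s}f)\Big|_{x_q = x_p}, \]
together with an identical expansion with $G,P$ in place of $g,f$. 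The decisive point, and the reason the hypothesis $M_p = M_q$ is essential, is that the eigenvalue/multiplicity data at positions $p$ and $q$ coincide, so Definition~\ref{funcM} forces $P$ to match $\partial_p^s\partial_q^{j_p-s}f$ at the diagonal point $(\ldots,\lambda_{pm_p},\ldots,\lambda_{pm_p},\ldots)$ whenever $s < r_{pm_p}$ and $j_p - s < r_{pm_p}$; both inequalities are automatic from $0 \leq s \leq j_p < r_{pm_p}$. The two chain-rule expansions then agree term by term at the required evaluation points, finishing the verification. I expect this last bookkeeping, matching the mixed derivatives produced by $(\partial_p+\partial_q)^{j_p}$ against exactly those that the interpolation hypothesis on $P$ controls, to be the only real obstacle in the proof.
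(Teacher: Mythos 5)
Your proposal is correct and follows essentially the same route as the paper: monomials via $M^aM^b=M^bM^a=M^{a+b}$, linearity for polynomials, then reduction of general $f$ to an interpolation polynomial $P$ and verification that the substituted polynomial $P(\ldots,x_p,\ldots)$ interpolates $g$, using the binomial expansion of $\partial_p^{j_p}$ acting on the diagonal restriction. Your closing observation that the mixed derivatives $\partial_p^s\partial_q^{j_p-s}$ with $s\leq j_p<r_{pm_p}$ are exactly those controlled by Definition~\ref{funcM} because $M_p=M_q$ is precisely the step the paper's proof invokes when it asserts the binomial sum vanishes.
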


\begin{proof}We proceed as in the proof of Theorem \ref{contr1}.
If $f$ is a monomial, the result easily follows from the trivial fact that $M^aM^b=M^bM^a=M^{a+b}$ for any square matrix $M$.
So by linearity, it also holds for polynomials.

For a more general $f$, we set again $P$ an interpolation polynomial such that $f^\otimes(M_1,\ldots,M_k)=P^\otimes(M_1,\ldots,M_k)$.
And so we have:
$$\begin{array}{l}f^\otimes(M_1,\ldots,M_k)^{i_1}{}_{j_1}\ldots^{i_{p-1}}{}_{j_{p-1}}{}^i{}_a{}^{i_{p+1}}{}_{j_{p+1}}\ldots^{i_{q-1}}{}_{j_{q-1}}{}^a{}_j{}^{i_{q+1}}{}_{j_{q+1}}\ldots^{i_k}{}_{j_k}=\\
P^\otimes(M_1,\ldots,M_k)^{i_1}{}_{j_1}\ldots^{i_{p-1}}{}_{j_{p-1}}{}^i{}_a{}^{i_{p+1}}{}_{j_{p+1}}\ldots^{i_{q-1}}{}_{j_{q-1}}{}^a{}_j{}^{i_{q+1}}{}_{j_{q+1}}\ldots^{i_k}{}_{j_k}=\\
P^\otimes(M_1,\ldots,M_k)^{i_1}{}_{j_1}\ldots^{i_{p-1}}{}_{j_{p-1}}{}^a{}_j{}^{i_{p+1}}{}_{j_{p+1}}\ldots^{i_{q-1}}{}_{j_{q-1}}{}^i{}_a{}^{i_{q+1}}{}_{j_{q+1}}\ldots^{i_k}{}_{j_k}=\\
f^\otimes(M_1,\ldots,M_k)^{i_1}{}_{j_1}\ldots^{i_{p-1}}{}_{j_{p-1}}{}^a{}_j{}^{i_{p+1}}{}_{j_{p+1}}\ldots^{i_{q-1}}{}_{j_{q-1}}{}^i{}_a{}^{i_{q+1}}{}_{j_{q+1}}\ldots^{i_k}{}_{j_k}=\\
Q^\otimes(M_1,\ldots,M_{q-1},M_{q+1},\ldots,M_k)^{i_1}{}_{j_1}\ldots^{i_{p-1}}{}_{j_{p-1}}{}^i{}_j{}^{i_{p+1}}{}_{j_{p+1}}\ldots^{i_{q-1}}{}_{j_{q-1}}{}^{i_{q+1}}{}_{j_{q+1}}\ldots^{i_k}{}_{j_k},\end{array}$$
where the polynomial $Q$ is defined by
$$Q(x_1,\ldots,x_{q-1},x_{q+1},\ldots,x_k)=P(x_1,\ldots,x_{q-1},x_p,x_{q+1},\ldots,x_k).$$
It remains to check that
$$\left.\left[\prod_{\substack{1\leq l\leq k\\l\neq q}}\left(\frac{\partial}{\partial x_l}\right)^{a_l}\right](g-Q)(x_1,\ldots,x_{q-1},x_{q+1},\ldots,x_k)\right|_{(x_1,\ldots,x_k)=(\lambda_{1m_1},\ldots,\lambda_{km_k})}=0,$$
with the same notation as above for the $\lambda_{lm}$'s and the same conditions for the $m_l$'s and $a_l$'s.

The left-hand side is equal to
$$\sum_{h=0}^{a_p}\binom{a_p}{h}\left[\partial_p^h\partial_q^{a_p-h}\prod_{\substack{1\leq l\leq k\\l\neq p\\l\neq q}}\partial_l^{a_l}\right](f-P)(\lambda_{1m_1},\ldots,\lambda_{q-1m_{q_1}},\lambda_{pm_p},\lambda_{q+1m_{q+1}},\ldots,\lambda_{km_k}).$$
It is $0$ because $P$ is a nice interpolation polynomial of $f$.

So we get $Q^\otimes(M_1,\ldots,M_{q-1},M_{q+1},\ldots,M_k)=g^\otimes(M_1,\ldots,M_{q-1},M_{q+1},\ldots,M_k)$ and the theorem is proved.
\end{proof}

In the case where $E_p=E_q$ but $M_p$ and $M_q$ are different, there is no such simplified expression for the contraction, but one has a kind of commutation property if $M_p$ and $M_q$ commute.

\begin{prop}\label{commut2}Let $M_1,\ldots,M_k$ be endomorphisms of finite dimensional $\C$-vector spaces $E_1,\ldots,E_k$.
Assume that for two indexes $1\leq p<q\leq k$, we have $E_p=E_q$ and the endomorphisms $M_p$ and $M_q$ commute.
Let $f$ be a function from $\C^k$ to $\C$, such that $f^\otimes(M_1,\ldots,M_k)$ is well defined.
Then we have:
$$\begin{array}{l}f^\otimes(M_1,\ldots,M_k)^{i_1}{}_{j_1}\ldots^{i_{p-1}}{}_{j_{p-1}}{}^i{}_a{}^{i_{p+1}}{}_{j_{p+1}}\ldots^{i_{q-1}}{}_{j_{q-1}}{}^a{}_j{}^{i_{q+1}}{}_{j_{q+1}}\ldots^{i_k}{}_{j_k}=\\
f^\otimes(M_1,\ldots,M_k)^{i_1}{}_{j_1}\ldots^{i_{p-1}}{}_{j_{p-1}}{}^a{}_j{}^{i_{p+1}}{}_{j_{p+1}}\ldots^{i_{q-1}}{}_{j_{q-1}}{}^i{}_a{}^{i_{q+1}}{}_{j_{q+1}}\ldots^{i_k}{}_{j_k}.\end{array}$$
\end{prop}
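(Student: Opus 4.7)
The plan is to follow the template used in the proofs of Theorems \ref{contr1} and \ref{contr2}: reduce to the monomial case by linearity and interpolation, then exploit the hypothesis that $M_p$ and $M_q$ commute. Since Definition \ref{funcM} gives a polynomial $P$ with $f^\otimes(M_1,\ldots,M_k)=P^\otimes(M_1,\ldots,M_k)$, and the two contractions appearing in the statement are $\C$-linear functions of $P$, it suffices to establish the equality when $P$ is a monomial $x_1^{\alpha_1}\cdots x_k^{\alpha_k}$.

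In that monomial case, $P^\otimes(M_1,\ldots,M_k)=M_1^{\alpha_1}\otimes\cdots\otimes M_k^{\alpha_k}$, so the first contraction reads, after extracting the factors untouched by the contraction,
\[
(M_p^{\alpha_p})^i{}_a\,(M_q^{\alpha_q})^a{}_j=(M_p^{\alpha_p}M_q^{\alpha_q})^i{}_j,
\]
while the second contraction yields $(M_q^{\alpha_q}M_p^{\alpha_p})^i{}_j$. The hypothesis $M_pM_q=M_qM_p$ implies $M_p^{\alpha_p}M_q^{\alpha_q}=M_q^{\alpha_q}M_p^{\alpha_p}$, which gives the desired equality for monomials; the remaining tensor factors coincide in the two expressions, so nothing else needs to be checked.

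By linearity in the coefficients of $P$, the equality extends to arbitrary polynomials $P$, and hence to $f$ by passing to an interpolation polynomial as in Definition \ref{funcM}. There is no real obstacle here: unlike Theorem \ref{contr2}, we do not need any further verification about derivatives of an interpolant, because both sides of the claimed identity arise from the \emph{same} tensor $f^\otimes(M_1,\ldots,M_k)$ by two different contraction patterns; the only content is the commutation of $M_p^{\alpha_p}$ with $M_q^{\alpha_q}$, which is the mildest possible use of the hypothesis.
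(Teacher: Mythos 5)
Your proposal is correct and follows essentially the same route as the paper: replace $f$ by an interpolation polynomial, reduce to monomials by linearity, and observe that the two contraction patterns produce $(M_p^{\alpha_p}M_q^{\alpha_q})^i{}_j$ and $(M_q^{\alpha_q}M_p^{\alpha_p})^i{}_j$ respectively, which coincide since powers of commuting endomorphisms commute. Your remark that no derivative-matching verification is needed (because both sides are contractions of the same tensor) is also consistent with the paper, whose proof likewise omits any such step.
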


\begin{proof}One replaces $f$ with a suitable interpolation polynomial $P$ such that $f^\otimes(M_1,\ldots,M_k)=P^\otimes(M_1,\ldots,M_k)$.
Then using linearity, we only have to check the property for monomials, which easily follows from the well known fact that if $M_p$ and $M_q$ commute, then $M_p^{n_1}$ and $M_q^{n_2}$ also commute (the commutation of two endomorphisms $M$ and $N$ can be written $M^i{}_aN^a{}_j=M^a{}_jN^i{}_a$).
\end{proof}

\begin{rmk}We have a generalization of Remark \ref{commut1}.
If $M_1,\ldots,M_k$ and $M'_1\ldots,M'_k$ are endomorphisms of $E_1,\ldots,E_k$ such that for all $1\leq l\leq k$, $M_l$ and $M'_l$ commute, and if $f$ and $g$ are two functions from $\C^k$ to $\C$ such that $\bar{M}=f^\otimes(M_1,\ldots,M_k)$ and $\bar{M}'=g^\otimes(M'_1,\ldots,M'_k)$ are well defined, then $\bar{M}$ and $\bar{M'}$ commute as endomorphisms of $E_1\otimes\ldots\otimes E_k$.
\end{rmk}

\begin{proof}The tensors $\bar{M}\bar{M'}$ and $\bar{M'}\bar{M}$ can both be obtained by $k$ contractions from the tensor
$h^\otimes(M_1,\ldots,M_k,M'_1,\ldots,M'_k)$, where $h(x_1,\ldots,x_k,y_1,\ldots,y_k)=f(x_1,\ldots,x_k)g(y_1,\ldots,y_k)$.
The fact you get the same result follows just from applying $k$ times proposition \ref{commut2}.
\end{proof}

For a given holomorphic function $f$, the tensor $f^\otimes(M_1,\ldots,M_k)$ is an holomorphic function of $M_1,\ldots,M_k$.
The following theorem gives the expression of its derivatives.

\begin{thm}\label{diff}Let $k\in\N^*$, and $M_1,\ldots,M_k$ be endomorphisms of the finite dimensional $\C$-vector spaces $E_1,\ldots,E_k$.
For any $1\leq l\leq k$, we set $e_l$ the number of different eigenvalues of $M_l$, $\lambda_{lm}$ these eigenvalues for $1\leq m\leq e_l$, $r_{lm}$ their multiplicity as roots of the minimal polynomial of $M_l$.
Let $f$ be a function from $\C^k$ to $\C$.
Let $1\leq p\leq k$.
Assume that for all $k$-tuple $(m_1,\ldots,m_k)$, $f(x_1,\ldots,x_k)$ is holomorphic with respect to $x_p$ and all the other variables $x_l$ such that $r_{lm_l}\geq2$ near $(\lambda_{1m_1},\ldots,\lambda_{km_k})$.

Then for any endomorphism $H$ of $E_p$, we have the following:
$$\begin{array}{l}
\lim_{\varepsilon\rightarrow0}\frac{f^\otimes(M_1,\ldots,M_{p-1},M_p+\varepsilon H,M_{p+1},\ldots,M_k)^{i_1}{}_{j_1}\ldots^{i_k}{}_{j_k}-f^\otimes(M_1,\ldots,\ldots,M_k)^{i_1}{}_{j_1}\ldots^{i_k}{}_{j_k}}{\varepsilon}=\\
g^\otimes(M_1,\ldots,M_{p-1},M_p,M_p,M_{p+1},\ldots,M_k)^{i_1}{}_{j_1}\ldots^{i_{p-1}}{}_{j_{p-1}}{}^{i_p}{}_i{}^j{}_{j_p}{}^{i_{p+1}}{}_{j_{p+1}}\ldots^{i_k}{}_{j_k}H^i{}_j,\end{array}$$
with $g$ the function of $k+1$ variables defined by
$$g(x_1,\ldots,x_{p-1},x_p,y,x_{p+1},\ldots,x_{k})=\left\{\begin{array}{ll}\frac{f(x_1,\ldots,x_{p-1},y,x_{p+1},\ldots,x_k)-f(x_1,\ldots,x_k)}{y-x_p}&\textrm{if }y\neq x_p\\
\partial_pf(x_1,\ldots,x_k)&\textrm{if }y=x_p.\end{array}\right.$$
\end{thm}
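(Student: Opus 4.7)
I would prove the theorem in two stages, reducing the general case to the polynomial case via Lagrange--Sylvester interpolation, exactly as in the earlier propositions of the paper.

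\textbf{Stage 1 (polynomial case).} For a monomial $P(x_1,\ldots,x_k) = x_1^{\alpha_1}\cdots x_k^{\alpha_k}$, only the $p$-th tensor factor of $P^\otimes(M_1,\ldots,M_p+\varepsilon H,\ldots,M_k)$ depends on $\varepsilon$, and the Leibniz rule gives
$$\frac{d}{d\varepsilon}\bigg|_{\varepsilon=0}(M_p+\varepsilon H)^{\alpha_p} = \sum_{s=0}^{\alpha_p-1} M_p^s\,H\,M_p^{\alpha_p-1-s}.$$
Since the divided difference of $x_p^{\alpha_p}$ equals $\sum_s x_p^s\,y^{\alpha_p-1-s}$, the function $g$ associated to $P$ satisfies $G^\otimes(\ldots,M_p,M_p,\ldots) = \sum_s(\cdots)\otimes M_p^s\otimes M_p^{\alpha_p-1-s}\otimes(\cdots)$, and the index contraction with $H^i{}_j$ prescribed by the theorem recovers exactly the right-hand side above in the $p$-th slot. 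Extending by linearity gives the formula for every polynomial.

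\textbf{Stage 2 (general $f$).} Choose a polynomial $P$ satisfying the usual Lagrange--Sylvester conditions so that $P^\otimes(M_1,\ldots,M_k) = f^\otimes(M_1,\ldots,M_k)$, but with the extra requirement that $P$ agrees with $f$ to order $N \geq 2\max_m r_{pm}$ in the variable $x_p$ at each $\lambda_{pm}$; the holomorphicity of $f$ in $x_p$ makes this possible. Let $G$ be the divided-difference polynomial of $P$. Two claims finish the argument: first, $G^\otimes(\ldots,M_p,M_p,\ldots) = g^\otimes(\ldots,M_p,M_p,\ldots)$, because writing $g-G = \bigl((f-P)(y)-(f-P)(x_p)\bigr)/(y-x_p)$ and Taylor expanding $f-P$ near $\lambda_{pm}$, the mixed partial $\partial_{x_p}^a\partial_y^b(g-G)$ at $x_p=y=\lambda_{pm}$ equals $a!\,b!$ times the Taylor coefficient $c_{a+b+1}$ of $f-P$, which vanishes since $a+b+1\leq 2r_{pm}-1 < N$ (off-diagonal and cross-variable evaluations are handled similarly); second, $P^\otimes(\ldots,M_p+\varepsilon H,\ldots) - f^\otimes(\ldots,M_p+\varepsilon H,\ldots) = o(\varepsilon)$, because the Lagrange--Sylvester expansion of the difference involves $(f-P)^{(j)}(\mu)$ at perturbed eigenvalues $\mu$ of $M_p+\varepsilon H$ satisfying $|\mu-\lambda_{pm}| = O(\varepsilon^{1/s_{pm}})$ together with spectral projectors of norm $O(\varepsilon^{-(r'-1)/s_{pm}})$, which yields $o(\varepsilon)$ for $N$ sufficiently large. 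Combining these with Stage 1 applied to $P$ proves the theorem.

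\textbf{Main obstacle.} The $o(\varepsilon)$ estimate in Stage 2 is the technical heart, requiring eigenvalue-perturbation analysis (Puiseux-type estimates) and norm bounds on spectral projectors as $\varepsilon\to 0$. A cleaner alternative avoiding these bounds is the holomorphic functional calculus: write $f^\otimes(M_1,\ldots,M_k) = \frac{1}{2\pi i}\oint_{\gamma_p} f(\cdots,z,\cdots)^\otimes \otimes_p (zI - M_p)^{-1}\,dz$ via the Cauchy formula in $x_p$, differentiate under the integral sign using $\frac{d}{d\varepsilon}\big|_0(zI-M_p-\varepsilon H)^{-1} = (zI-M_p)^{-1}H(zI-M_p)^{-1}$, and recognize the resulting double-resolvent expression as the contour-integral representation of $g^\otimes(\ldots,M_p,M_p,\ldots)$ contracted with $H$.
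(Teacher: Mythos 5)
Your Stage 1 and your Claim 1 are essentially the paper's own argument: the monomial/Leibniz computation identifying the divided difference, and the verification that the divided-difference polynomial of the interpolant and of $f$ agree as tensors at $(M_p,M_p)$ because the relevant mixed partials of $g-G$ on the diagonal reduce to a single $x_p$-derivative of $f-P$ of order at most $a+b+1\leq 2r_{pm}-1$, which vanishes if the interpolation in $x_p$ is taken to high enough order. (The paper gets this extra order for free by interpolating at $\Root(P_pP^{(0)})$, i.e.\ to order $r_{pm}+s_{pm}\geq 2r_{pm}$; your explicit requirement $N\geq 2\max_m r_{pm}$ serves the same purpose.)

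The genuine gap is your Claim 2, and you have correctly located it yourself. You assert $(f-P)^\otimes(M_1,\ldots,M_p+\varepsilon H,\ldots,M_k)=o(\varepsilon)$ by invoking Puiseux-type bounds $|\mu-\lambda_{pm}|=O(\varepsilon^{1/s_{pm}})$ on the perturbed eigenvalues together with a norm bound $O(\varepsilon^{-(r'-1)/s_{pm}})$ on the individual spectral projectors of $M_p+\varepsilon H$. Neither estimate is established, the quoted exponent for the projectors is not justified (the blow-up rate of the projectors onto the split eigenvalues depends on the Puiseux structure and is exactly the delicate point), and without a proven uniform exponent you cannot certify that any fixed $N$ suffices --- yet $N$ must be fixed in advance for Claim 1. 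So as written the argument does not close. The paper sidesteps all of this with one device you did not find: it lets the interpolant depend on $\varepsilon$, taking $Q_\varepsilon$ to be the Lagrange--Sylvester interpolation of $f$ on $\Root(P_p P^{(\varepsilon)})$, where $P^{(\varepsilon)}$ is the characteristic polynomial of $M_p+\varepsilon H$. Then $f^\otimes=Q_\varepsilon^\otimes$ holds \emph{exactly} at $M_p+\varepsilon H$ (and at $M_p$), the error term becomes $(Q_\varepsilon-Q_0)^\otimes(M_1,\ldots,M_p+\varepsilon H,\ldots,M_k)$, and this is $o(\varepsilon)$ by a soft argument: it is a polynomial in the perturbation parameter with vanishing constant term and coefficients tending to $0$, using only continuity of the roots and of the interpolation --- no quantitative spectral perturbation theory at all. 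Your contour-integral alternative (differentiating $\frac{1}{2\pi i}\oint f(\ldots,z,\ldots)(zI-M_p)^{-1}\,dz$ under the integral and recognizing the double-resolvent kernel of the divided difference) is a legitimate and genuinely different route that would avoid the estimates entirely, but it too is only sketched: you would still need to prove the contour representation of $g^\otimes(\ldots,M_p,M_p,\ldots)$ within the paper's tensor formalism. Either route should be carried out in full before the proof can be accepted.
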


\begin{rmk}\label{alterf}When $y$ is close to $x$, we can write
$$g(x_1,\ldots,x_{p-1},x_p,y,x_{p+1},\ldots,x_{k})=\int_0^1\partial_pf(x_1,\ldots,x_{p-1},(1-t)x_p+ty,x_{p+1},\ldots,x_k).$$
So we don't have regularity issues with $g$ near the hyperplane $x_p=y$.
\end{rmk}

\begin{proof}In the simple case of a monomial $f(x_1,\ldots,x_k)=\prod_{l=1}^kx_l^{\alpha_l}$, it follows from the fact that
\begin{align*}\lim_{\varepsilon\rightarrow0}\frac{((M_p+\varepsilon H)^{\alpha_p})^i{}_j-(M_p^{\alpha_p})^i{}_j}{\varepsilon}&=\sum_{h=0}^{\alpha_p-1}(M_p^{h}HM_p^{\alpha_p-1-h})^i{}_j\\
&=\left((x,y)\rightarrow\sum_{h=0}^{\alpha_p-1}x^hy^{\alpha_p-1-h}\right)^\otimes(M_p,M_p)^i{}_a{}^b{}_jH^a{}_b\end{align*}
and that we have
$$\sum_{h=0}^{\alpha_p-1}x^hy^{\alpha_p-1-h}=\left\{\begin{array}{ll}\frac{y^{\alpha_p}-x^{\alpha_p}}{y-x}&\textrm{if }x\neq y\\
\frac{\der}{\der x}x^{\alpha_p}&\textrm{if }x=y.\end{array}\right.$$

By linearity, this result extends to polynomials.

For a more general $f$, for any $1\leq l\leq k$, we set $P_l$ the minimal polynomial of $M_l$, and we set $P^{(\varepsilon)}$ the characteristic polynomial of $M_p+\varepsilon H$.
Let $Q_\varepsilon$ be the Lagrange--Sylvester interpolation polynomial of $f$ associated to the product of multisets $\prod_{l=1}^{p-1}\Root(P_l)\times\Root(P_pP^{(\varepsilon)})\times\prod_{l=p+1}^k\Root(P_l)$.
That is to say, if we set $e_l$ the number of different roots of $P_l$, $\lambda_{lm}$ these roots for $1\leq m\leq e_l$ and $r_{lm}$ their multiplicities, and if we set likewise $e^{(\varepsilon)}$ the number of different roots of $P_pP^{(\varepsilon)}$, $\lambda^{(\varepsilon)}_m$ these roots for $1\leq m\leq e^{(\varepsilon)}$ and $r^{(\varepsilon)}_m$ their multiplicities, then $Q_{\varepsilon}(x_1,\ldots,x_k)$ is the unique polynomial of $k$ variables whose degree with respect to the variable $x_l$ is at most $\deg(P_l)$, except for the variable $x_p$ for which it is at most $\deg(P_p)+\dim(E_p)$, and wich satisfies the equalities
$$\left[\prod_{l=1}^k\partial_l^{a_l}\right](Q_\varepsilon-f)(\lambda_{1m_1},\ldots,\lambda_{p-1m_{p-1}},\lambda^{(\varepsilon)}_{m_p},\lambda_{p+1m_{p+1}},\ldots,\lambda_{km_k})=0$$
for all $k$-tuples $(m_1,\ldots,m_k)$ and $(a_1,\ldots,a_k)$ such that $1\leq m_l\leq e_l$ and $0\leq a_l<r_{lm_l}$ for $l\neq p$ and $1\leq m_p\leq e^{(\varepsilon)}$ and $0\leq a_p<r^{(\varepsilon)}_{m_p}$.

Because of the continuity of the Lagrange--Sylvester interpolation, we have $Q_\varepsilon\xrightarrow[\varepsilon\rightarrow0]{}Q_0$.
Furthermore, we have
$$f^\otimes(M_1,\ldots,M_{p-1},M_p+\varepsilon H,M_{p+1},\ldots,M_k)=Q_\varepsilon^\otimes(M_1,\ldots,M_{p-1},M_p+\varepsilon H,M_{p+1},\ldots,M_k).$$
So we get
\begin{multline*}f^\otimes(M_1,\ldots,M_{p-1},M_p+\varepsilon H,M_{p+1},\ldots,M_k)-f^\otimes(M_1,\ldots,M_k)=\\
Q_\varepsilon^\otimes(M_1,\ldots,M_{p-1},M_p+\varepsilon H,M_{p+1},\ldots,M_k)-Q_0^\otimes(M_1,\ldots,M_k)\\
=(Q_\varepsilon-Q_0)^\otimes(M_1,\ldots,M_{p-1},M_p+\varepsilon H,M_{p+1},\ldots,M_k)\\
+Q_0^\otimes(M_1,\ldots,M_{p-1},M_p+\varepsilon H,M_{p+1},\ldots,M_k)-Q_0^\otimes(M_1,\ldots,M_k).
\end{multline*}
For a fixed $\varepsilon$, the difference $(Q_\varepsilon-Q_0)^\otimes(M_1,\ldots,M_{p-1},M_p+\delta H,M_{p+1},\ldots,M_k)$ is a polynomial of $\delta$ whose coefficients tend to $0$ when $\varepsilon$ tends to $0$.
The coefficient of this polynomial corresponding to $\delta^0$ is $(Q_\varepsilon-Q_0)^\otimes(M_1,\ldots,M_k)=0$.
Thus we have
$$(Q_\varepsilon-Q_0)^\otimes(M_1,\ldots,M_{p-1},M_p+\varepsilon H,M_{p+1},\ldots,M_k)=o(\varepsilon).$$
Hence, using the Theorem for the polynomial $Q_0$, we have
$$\begin{array}{l}\lim_{\varepsilon\rightarrow0}\frac{f^\otimes(M_1,\ldots,M_{p-1},M_p+\varepsilon H,M_{p+1},\ldots,M_k)^{i_1}{}_{j_1}\ldots^{i_k}{}_{j_k}-f^\otimes(M_1,\ldots,\ldots,M_k)^{i_1}{}_{j_1}\ldots^{i_k}{}_{j_k}}{\varepsilon}=\\
R^\otimes(M_1,\ldots,M_{p-1},M_p,M_p,M_{p+1},\ldots,M_k)^{i_1}{}_{j_1}\ldots^{i_{p-1}}{}_{j_{p-1}}{}^{i_p}{}_i{}^j{}_{j_p}{}^{i_{p+1}}{}_{j_{p+1}}\ldots^{i_k}{}_{j_k}H^i{}_j,\end{array}$$
with $R$ the polynomial of $k+1$ variables given by:
$$R(x_1,\ldots,x_{p-1},x_p,y,x_{p+1},\ldots,x_{k})=\left\{\begin{array}{ll}\frac{Q_0(x_1,\ldots,x_{p-1},y,x_{p+1},\ldots,x_k)-Q_0(x_1,\ldots,x_k)}{y-x_p}&\textrm{if }y\neq x_p\\
\partial_pQ_0(x_1,\ldots,x_k)&\textrm{if }y=x_p.\end{array}\right.$$

Let $(m_1,\ldots,m_p,m'_p,m_{p+1},\ldots,m_k)$ and $(a_1,\ldots,a_p,a'_p,a_{p+1},\ldots,a_k)$ be two $k+1$-tuples satisfying $1\leq m_l\leq e_l$, $1\leq m'_p\leq e_p$, $0\leq a_l<r_{lm_l}$ and $0\leq a'_p<r_{pm'_p}$.
We want to show that
$$\begin{array}{r}\left.\left[\frac{\partial}{\partial y}^{a'_p}\prod_{l=1}^k\frac{\partial}{\partial x_l}^{a_l}\right](g-R)(x_1,\ldots,x_p,y,x_{p+1},\ldots,x_k)\right|=0.\hfill\\
\scriptstyle(x_1,\ldots,x_p,y,x_{p+1},\ldots,x_k)=(\lambda_{1m_1},\ldots,\lambda_{pm_p},\lambda_{pm'_p},\lambda_{p+1m_{p+1}},\ldots,\lambda_{km_k})\end{array}$$
If $m_p\neq m'_p$, this quantity is
\begin{multline*}\scriptstyle\sum_{h=0}^{a'_p}\binom{a'_p}{h}\frac{(-1)^{a'_p-h}(a_p+a'_p-h)!\left[\partial_p^h\prod_{\substack{1\leq l\leq k\\l\neq p}}\partial_l^{a_l}\right](f-Q_0)(\lambda_{1m_1},\ldots,\lambda_{p-1m_{p-1}},\lambda_{pm'_p},\lambda_{p+1m_{p+1}},\ldots,\lambda_{km_k})}{(\lambda_{pm'_p}-\lambda_{pm_p})^{a_p+a'_p-h+1}}\\
\scriptstyle-\sum_{h=0}^{a_p}\binom{a_p}{h}\frac{(-1)^{a'_p}(a_p+a'_p-h)!\left[\partial_p^h\prod_{\substack{1\leq l\leq k\\l\neq p}}\partial_l^{a_l}\right](f-Q_0)(\lambda_{1m_1},\ldots,\lambda_{km_k})}{(\lambda_{pm'_p}-\lambda_{pm_p})^{a_p+a'_p-h+1}}\end{multline*}
which is $0$ as wanted, since each term of both sums is $0$.
If $m_p=m'_p$, using the formula of Remark \ref{alterf}, the quantity we want to compute is
$$\left[\partial_p^{a_p+a'_p+1}\prod_{\substack{1\leq l\leq k\\l\neq p}}\partial_l^{a_l}\right](f-Q_0)(\lambda_{1m_1},\ldots,\lambda_{km_k})\int_0^1(1-t)^{a'_p}t^{a_p}\der t$$
which is $0$ too because $a_p+a'_p+1<2r_{pm_p}\leq r_{pm_p}+s_{pm_p}$, where $s_{pm_p}$ is the multiplicity of $\lambda_{pm_p}$ as root of the characteristic polynomial of $M_p$.

Thus $(g-R)^\otimes(M_1,\ldots,M_{p-1},M_p,M_p,M_{p+1},\ldots,M_k)=0$ and the theorem is proved.
\end{proof}

\section{Some possible applications}
In the particular case of symmetric square matrices with real coefficients, we know that such matrices have real eigenvalues and are diagonalizable in an othonormal basis.
So we can apply real-valued functions of real variables to them, without having the regularity concerns for the definitions.
In this framework, we have the following result.

\begin{prop} We denote by $\Sym_n(\Rl)$ the vector space of symmetric $n\times n$ matrices with real coefficients, equipped with the Hilbert--Schmidt norm
$$\|M\|_{HS}=\sqrt{\Tr(M^TM)}=\sqrt{\Tr(M^2)}.$$

Let $f:\Rl\mapsto\Rl$ be a function.
Assume that $f$ is $k$-Lipschitz for some $k>0$.

Then the function
$$F:\left\{\begin{array}{rcl}\Sym_n(\Rl)&\mapsto &\Sym_n(\Rl)\\
M&\mapsto&f(M)\end{array}\right.$$
is also $k$-Lipschitz.
\end{prop}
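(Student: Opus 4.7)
The plan is to pass through simultaneous spectral decompositions of $M$ and $N$ and reduce the matrix Lipschitz bound to the pointwise Lipschitz inequality on the eigenvalues. Write $M = U\Lambda U^T$ and $N = V D V^T$ with $U, V \in O(n)$ and $\Lambda = \mathrm{diag}(\lambda_1,\ldots,\lambda_n)$, $D = \mathrm{diag}(\mu_1,\ldots,\mu_n)$. Since a real symmetric matrix is orthogonally diagonalizable (its minimal polynomial has only simple roots), the remark following Proposition \ref{evapp}, applied in the single-matrix case $k=1$, yields $F(M) = U\, f(\Lambda)\, U^T$ with $f(\Lambda) = \mathrm{diag}(f(\lambda_i))$, and likewise $F(N) = V\, f(D)\, V^T$; in particular each is symmetric, so $F$ indeed maps into $\Sym_n(\Rl)$.

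Setting $W = U^T V \in O(n)$ and using the orthogonal invariance of $\|\cdot\|_{HS}$, I would write
$$\|F(M) - F(N)\|_{HS}^2 = \|f(\Lambda) - W\, f(D)\, W^T\|_{HS}^2,$$
expand this as $\Tr(f(\Lambda)^2) + \Tr(f(D)^2) - 2\,\Tr(f(\Lambda)\, W\, f(D)\, W^T)$, and use the row- and column-sum identities $\sum_j W_{ij}^2 = \sum_i W_{ij}^2 = 1$ to rewrite $\sum_i f(\lambda_i)^2 = \sum_{i,j} W_{ij}^2 f(\lambda_i)^2$ and $\sum_j f(\mu_j)^2 = \sum_{i,j} W_{ij}^2 f(\mu_j)^2$. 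Collecting terms produces the central identity
$$\|F(M) - F(N)\|_{HS}^2 = \sum_{i,j} W_{ij}^2 \bigl(f(\lambda_i) - f(\mu_j)\bigr)^2.$$

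The Lipschitz hypothesis then applies term by term, $(f(\lambda_i) - f(\mu_j))^2 \leq k^2 (\lambda_i - \mu_j)^2$, and the resulting upper bound $k^2 \sum_{i,j} W_{ij}^2 (\lambda_i - \mu_j)^2$ is exactly $k^2 \|M - N\|_{HS}^2$, as seen by applying the same identity in the special case $f = \mathrm{id}$. The main obstacle is the central identity: it is just a short trace calculation, but it crucially relies on symmetrizing the diagonal and cross terms by means of the orthogonality of the overlap matrix $W$. Once that identity is in hand, the passage from the pointwise Lipschitz property of $f$ to the Hilbert--Schmidt Lipschitz property of $F$ is immediate.
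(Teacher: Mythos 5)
Your proof is correct, and it takes a genuinely different route from the paper's. You diagonalize both endpoints, $M=U\Lambda U^T$ and $N=VDV^T$, and reduce the claim to the identity
$$\|F(M)-F(N)\|_{HS}^2=\sum_{i,j}W_{ij}^2\bigl(f(\lambda_i)-f(\mu_j)\bigr)^2,\qquad W=U^TV,$$
which holds exactly as you describe: the cross term is $\Tr(f(\Lambda)Wf(D)W^T)=\sum_{i,j}W_{ij}^2f(\lambda_i)f(\mu_j)$, and the two squared terms are absorbed using the fact that the rows and columns of $(W_{ij}^2)$ sum to $1$; the scalar bound $(f(\lambda_i)-f(\mu_j))^2\le k^2(\lambda_i-\mu_j)^2$ together with the same identity for $f=\mathrm{id}$ then finishes the argument. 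The paper proceeds differently: it first assumes $f\in\mathcal{C}^1$, computes $\der F(M)$ from a real version of Theorem \ref{diff}, shows via Proposition \ref{evapp} that $\der F(M)$ is diagonalized in the orthonormal basis $u_i\otimes u_i^\ast$, $(u_i\otimes u_j^\ast+u_j\otimes u_i^\ast)/\sqrt2$ of $\Sym_n(\Rl)$ with eigenvalues the divided differences $f_1(\lambda_i,\lambda_j)$, all bounded by $k$, and then treats a general Lipschitz $f$ by mollification. Your argument is shorter and entirely elementary: it needs no differentiability of $f$ and hence no approximation step, only the fact (valid by Definition \ref{funcM}, since the minimal polynomial of a real symmetric matrix has simple roots, together with Lemma \ref{conjug}) that $F(M)=U\,\mathrm{diag}(f(\lambda_1),\ldots,f(\lambda_n))\,U^T$. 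What the paper's longer route buys is a demonstration of its differential calculus for matrix functions and the finer spectral description of $\der F(M)$ itself, rather than just the global Lipschitz constant.
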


\begin{proof}We first prove that the result holds when $f$ is a $\mathcal{C}^1$ function.
In that case, a modified version of Theorem \ref{diff} holds, so $F$ is differentiable, and its derivative at $M$ is given by
$$(\der F(M).H)^i{}_j=\lim_{\varepsilon\rightarrow 0}\frac{F(M+\varepsilon H)^i{}_j-F(M)^i{}_j}{\varepsilon}=f_1(M,M)^i{}_k{}^l{}_jH^k{}_l,$$
where we have
$$f_1(x,y)=\left\{\begin{array}{lr}\frac{f(y)-f(x)}{y-x}&\textrm{if }x\neq y\\
f'(x)&\textrm{if }x=y.\end{array}\right.$$

For a given $M\in\Sym_n(\Rl)$, we denote by $\lambda_1,\lambda_2,\ldots,\lambda_n$ its eigenvalues and $u_1,\ldots,u_n$ a set of orthonormed eigenvectors of $M$.
Then, the matrices $u_i\otimes u^\ast_i$ for $1\leq i\leq n$ and $\frac{u_i\otimes u^\ast_j+u_j\otimes u^\ast_i}{\sqrt{2}}$ for $1\leq i<j\leq n$ form an orthonormal basis of $\Sym_n(\Rl)$ and are eigenvectors of $\der F(M)$ with eigenvalues $f_1(\lambda_i,\lambda_i)$ and $f_1(\lambda_i,\lambda_j)$ respectively, according to Proposition \ref{evapp}.
Using the fact that $f$ is $k$-Lipschitz, we have $\forall x,y\in\Rl,|f_1(x,y)|\leq k$.

Thus $\der F(M)$ is a $k$-Lipschitz linear function of $\Sym_n(\Rl)$.
This being true for every $M\in\Sym_n(\Rl)$, the function $F$ itself is also $k$-Lipschitz.

If the function $f$ is not $\mathcal{C}^1$, one can approximate $f$ by a $\mathcal{C}^1$ which is also $k$-Lipschitz.
For example, we set
$$g(x)=\left\{\begin{array}{rl}0&\textrm{if }|x|\geq1\\
\frac{\nep^\frac{1}{1-x^2}}{\int_{-1}^1\nep^\frac{1}{1-y^2}\der y}&\textrm{if }|x|<1,\end{array}\right.$$
$$g_\varepsilon(x)=\frac{1}{\varepsilon}g\left(\frac{x}{\varepsilon}\right)$$
and
$$f_\varepsilon(x)=\int_\Rl f(x-y)g_\varepsilon(y)\der y=\int_\Rl f(y)g_\varepsilon(x-y)\der y.$$
Then we have
$$|f_\varepsilon(x)-f_\varepsilon(y)|=|\int_\Rl (f(x-z)-f(y-z))g_\varepsilon(z)\der z|\leq k|x-y|\int_\Rl |g_\varepsilon(z)|\der z=k|x-y|,$$
so $f_\varepsilon$ is $k$-Lipschitz.
And $f_\varepsilon$ is differentiable, its derivative being
$$f_\varepsilon'(x)=\int_\Rl f(y)g_\varepsilon'(x-y)\der y=\int_\Rl f(x-y)g_\varepsilon'(y)\der y,$$
which is continuous so $f_\varepsilon$ is $\mathcal{C}^1$.

Furthermore, we have
$$|f_\varepsilon(x)-f(x)|=|\int_\Rl (f(y)-f(x))g_\varepsilon(x-y)\der y|\leq k\int_\Rl|y|g_\varepsilon(y)\der y=k\varepsilon\int_\Rl|y|g(y)\der y.$$
So for any $M\in\Sym_n(\Rl)$, we have
\begin{multline*}\|f_\varepsilon(M)-f(M)\|_{HS}^2=\|(f_\varepsilon-f)(M)\|_{HS}^2=\Tr((f_\varepsilon-f)^2(M))=\sum_{i=1}^n(f_\varepsilon-f)^2(\lambda_i)\\
\leq k^2\varepsilon^2n\left(\int_\Rl|x|g(x)\der x\right)^2.\end{multline*}

So, for $M_1$ and $M_2$ in $\Sym_n(\Rl)$, we have
\begin{multline*}\|F(M_1)-F(M_2)\|_{HS}\leq \|F(M_1)-f_\varepsilon(M_1)\|_{HS}+\|f_\varepsilon(M_1)-f_\varepsilon(M_2)\|_{HS}+\|f_\varepsilon(M_2)-F(M_2)\|_{HS}\\
\leq k\varepsilon\sqrt{n}\int_\Rl|x|g(x)\der x+k\|M2-M_1\|_{HS}+k\varepsilon\sqrt{n}\int_\Rl|x|g(x)\der x.\end{multline*}
This inequality being true for every $\varepsilon>0$, we finally get $\|F(M_1)-F(M_2)\|_{HS}\leq k\|M2-M_1\|_{HS}$, so $F$ is k-Lipschitz.
\end{proof}

The main reason the author thinks it is a good idea to introduce $f^\otimes(M_1,\ldots, M_k)$ is that it allows to give a rather simple expression of derivatives of a function of a square matrix.
If we iterate Theorem \ref{diff}, we can get the following expression for the $n$-th derivative of $f(M)=f^\otimes(M)$.

\begin{prop}\label{iter}Let $M$ and $H$ be two endomorphisms of a finite-dimensional $\C$-vector space $E$.
Let $f:\C\mapsto\C$ be a function which is holomorphic in the neighborhood of each eigenvalue of $M$.
Then the function $F:U\subset\C\mapsto L(E,E)$ given by $F(z)=f(M+zH)$ is holomorphic in a neighborhood of $0$ and furthermore, we have:
$$F^{(n)}(z)^i{}_j=n!f_n^\otimes(\underbrace{M+zH,M+zH,\ldots,M+zH}_{n+1\textrm{ times}})^i{}_{j_1}{}^{i_1}{}_{j_2}\ldots^{i_{n-1}}{}_{j_n}{}^{i_n}{}_jH^{j_1}{}_{i_1}\ldots H^{j_n}{}_{i_n},$$
where $U$ is an open subset of $\C$ which contains $0$, and
$$f_n(x_0,\ldots,x_n)=f[x_0,\ldots,x_n]$$
is the (generalized) divided difference of the function $f$ on the nodes $x_0,\ldots,x_n$.
\end{prop}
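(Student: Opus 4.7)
The plan is to argue by induction on $n$, using Theorem \ref{diff} at every step together with the classical recursion and symmetry of divided differences.

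Before starting the induction, I would record the holomorphicity of $F$ near $0$. For $|z|$ small, every eigenvalue of $M+zH$ stays inside a fixed neighbourhood where $f$ is holomorphic, so choosing a contour $\Gamma$ enclosing every eigenvalue of $M$ on which $f$ is holomorphic, one can write $F(z)=\frac{1}{2\pi i}\oint_\Gamma f(\zeta)(\zeta I-M-zH)^{-1}\der\zeta$, which is manifestly holomorphic in $z$. The base case $n=1$ is then Theorem \ref{diff} applied to $f$ at $M+zH$ perturbed by $\varepsilon H$: the limit defining $F'(z)$ equals $g^\otimes(M+zH,M+zH)^i{}_a{}^b{}_j H^a{}_b$ with $g(x,y)=(f(y)-f(x))/(y-x)$ off the diagonal and $g(x,x)=f'(x)$, which is precisely $f_1(x,y)=f[x,y]$, so the formula holds for $n=1$.

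For the inductive step, assume the formula at level $n$. Since the tensor factors $H$ do not depend on $z$, only $T(z)\coleq f_n^\otimes(M+zH,\ldots,M+zH)$ is affected by differentiation. The map $z\mapsto(M+zH,\ldots,M+zH)$ has diagonal derivative $(H,\ldots,H)$, so the multivariate chain rule gives $\frac{\der T}{\der z}=\sum_{p=0}^n\partial_pT$, where $\partial_pT$ is the partial derivative of $f_n^\otimes$ in the $p$-th matrix slot taken in the direction $H$. Applying Theorem \ref{diff} to each summand produces $\partial_pT=g_p^\otimes(M+zH,\ldots,M+zH)\cdot H$, where $g_p$ is the one-variable divided-difference of $f_n$ in its $p$-th argument. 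The crucial algebraic observation is that $g_p$ does not depend on $p$ and equals $f_{n+1}$: by the symmetry of the divided difference $f_n(x_0,\ldots,x_n)=f[x_0,\ldots,x_n]$,
$$g_p(x_0,\ldots,x_n,y)=\frac{f[x_0,\ldots,\widehat{x_p},\ldots,x_n,y]-f[x_0,\ldots,x_n]}{y-x_p}=f[x_0,\ldots,x_n,y],$$
the second equality being the standard Newton recursion once $x_p$ is cyclically moved to the first position and $y$ to the last. The $n+1$ identical summands then combine with the prefactor $n!$ from the induction hypothesis to yield $F^{(n+1)}(z)=(n+1)!\,f_{n+1}^\otimes(M+zH,\ldots,M+zH)\cdot H^{\otimes(n+1)}$, which is the claim at level $n+1$.

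The main obstacle is purely bookkeeping rather than conceptual: one has to check that each insertion of a fresh pair of upper/lower indices at position $p$ with a new $H$, as produced by Theorem \ref{diff}, becomes, after relabeling the dummy indices $i_1,\ldots,i_n,j_1,\ldots,j_n$, exactly the zigzag contraction pattern $^i{}_{j_1}{}^{i_1}{}_{j_2}\ldots{}^{i_{n+1}}{}_j$ with $H^{j_1}{}_{i_1}\cdots H^{j_{n+1}}{}_{i_{n+1}}$ of the statement at level $n+1$. Once this relabeling is carried out, the $n+1$ terms indexed by $p$ are literally equal, not merely equal up to permutation, so no additional symmetrization factor appears beyond the one already counted by $(n+1)\cdot n!=(n+1)!$. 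No ingredient beyond Theorem \ref{diff} and the symmetry and recursion of divided differences is needed.
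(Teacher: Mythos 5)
Your proposal is correct and follows essentially the same route as the paper: induction on $n$, differentiating each of the $n+1$ matrix slots via Theorem \ref{diff}, identifying the resulting one-slot divided difference of $f_n$ with $f_{n+1}$, and relabelling indices so the $n+1$ equal terms combine into the factor $(n+1)!$. You in fact spell out the divided-difference recursion $f[x_0,\ldots,\widehat{x_p},\ldots,x_n,y]-f[x_0,\ldots,x_n]=(y-x_p)f[x_0,\ldots,x_n,y]$ more explicitly than the paper does, which leaves that step implicit.
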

\begin{proof}We proceed by induction on $n$.
The case $n=0$ is trivial.

Assume that
$$F^{(n)}(z)^i{}_j=n!f_n^\otimes(M+zH,\ldots,M+zH)^i{}_{j_1}{}^{i_1}{}_{j_2}\ldots^{i_{n-1}}{}_{j_n}{}^{i_n}{}_jH^{j_1}{}_{i_1}\ldots H^{j_n}{}_{i_n}.$$
then if we want to differentiate this expression one more time with respect to $z$, we have to differentiate $f_n^\otimes(M+zH,\ldots,M+zH)$ with respect to each (matricial) variable.
So using Theorem \ref{diff}, we get
\begin{align*}F^{(n+1)}(z)^i{}_j&=n!\sum_{k=0}^nf_{n+1}^\otimes(M+zH,\ldots,M+zH)^i{}_{j_1}{}^{i_1}{}_{j_2}\ldots^{i_k}{}_{i'}{}^{j'}{}_{j_{k+1}}\ldots^{i_{n-1}}{}_{j_n}{}^{i_n}{}_jH^{i'}{}_{j'}H^{j_1}{}_{i_1}\ldots H^{j_n}{}_{i_n}\\
&=n!\sum_{k=0}^nf_{n+1}^\otimes(M+zH,\ldots,M+zH)^i{}_{j_1}{}^{i_1}{}_{j_2}\ldots^{i_n}{}_{j_{n+1}}{}^{i_{n+1}}{}_jH^{j_1}{}_{i_1}\ldots H^{j_{n+1}}{}_{i_{n+1}}\\
&=(n+1)!f_{n+1}^\otimes(M+zH,\ldots,M+zH)^i{}_{j_1}{}^{i_1}{}_{j_2}\ldots^{i_n}{}_{j_{n+1}}{}^{i_{n+1}}{}_jH^{j_1}{}_{i_1}\ldots H^{j_{n+1}}{}_{i_{n+1}},
\end{align*}
where we just relabelled the indexes to derive line 2 from line 1.
So the induction hypothesis is true at the rank $n+1$.
\end{proof}

\begin{rmk}In the case where $H$ and $M$ commute, we can get a simpler expression.
Indeed, we can write
$$F^{(n)}(z)^i{}_j=n!g_n^\otimes(M+zH,\ldots,M+zH,H,\ldots,H)^i{}_{j_1}{}^{i_1}{}_{j_2}\ldots^{i_{n-1}}{}_{j_n}{}^{i_n}{}_j{}^{j_1}{}_{i_1}\ldots ^{j_n}{}_{i_n},$$
where $g_n$ is the function of $2n+1$ variables defined by
$$g_n(x_0,\ldots,x_n,y_1,\ldots,y_n)=f[x_0,\ldots,x_n]y_1\ldots y_n.$$
Since $H$ and $M$ commute, $H$ and $M+zH$ commute too.
Using Proposition \ref{commut2} sufficiently many times, we can get
$$F^{(n)}(z)^i{}_j=n!g_n^\otimes(M+zH,\ldots,M+zH,H,\ldots,H)^i{}_{j_1}{}^{j_1}{}_{i_1}{}^{i_1}{}_{j_2}\ldots^{j_n}{}_{i_n}{}^{i_n}{}_j.$$
Using Theorem \ref{contr2}, one gets:
$$F^{(n)}(z)^i{}_j=n!\bar{g}_n^\otimes(M+zH,H)^i{}_k{}^k{}_j$$
where $\bar{g}_n(x,y)=g_n(x,\ldots,x,y,\ldots,y)=f[x,\ldots,x]y^n=\frac{f^{(n)}(x)}{n!}y^n$.
So finally, we get:
$$F^{(n)}(z)=f^{(n)}(M+zH)H^n.$$
\end{rmk}

\begin{rmk}\label{trace}If we differentiate $\Tr(f(M+zH))$, one gets
$$F^{(n)}(z)^i{}_i=n!f_n^\otimes(M+zH,\ldots,M+zH)^i{}_{j_1}{}^{i_1}{}_{j_2}\ldots^{i_{n-1}}{}_{j_n}{}^{i_n}{}_iH^{j_1}{}_{i_1}\ldots H^{j_n}{}_{i_n}.$$
Using Theorem $\ref{contr2}$, we get
$$F^{(n)}(z)^i{}_i=n!h_{n,n}^\otimes(M+zH,\ldots,M+zH)^{i_n}{}_{j_1}{}^{i_1}{}_{j_2}\ldots^{i_{n-1}}{}_{j_n}H^{j_1}{}_{i_1}\ldots H^{j_n}{}_{i_n},$$
with $h_{n,n}$ the function of $n$ variables defined by
$$h_{n,n}(x_1,\ldots,x_n)=f_n(x_n,x_1,\ldots,x_n).$$
With $n=1$, one gets the classical result that
$$\frac{\der}{\der z}\Tr(f(M+zH))=\Tr(f'(M+zH)H).$$
If one differentiates $n-1$ extra times this formula, one gets
$$F^{(n)}(z)^i{}_i=(n-1)!f'^\otimes_{n-1}(M+zH,\ldots,M+zH)^{i_n}{}_{j_1}{}^{i_1}{}_{j_2}\ldots^{i_{n-1}}{}_{j_n}H^{j_1}{}_{i_1}\ldots H^{j_n}{}_{i_n},$$
which seems to be different from the other formula above.
In fact we get the same thing because if we take $h_{n,k}(x_1,\ldots,x_n)=h_{n,n}(x_{k+1},\ldots,x_n,x_1,\ldots,x_k)$, we have
\begin{multline*}h_{n,n}^\otimes(M+zH,\ldots,M+zH)^{i_n}{}_{j_1}{}^{i_1}{}_{j_2}\ldots^{i_{n-1}}{}_{j_n}H^{j_1}{}_{i_1}\ldots H^{j_n}{}_{i_n}=\\
h_{n,k}^\otimes(M+zH,\ldots,M+zH)^{i_n}{}_{j_1}{}^{i_1}{}_{j_2}\ldots^{i_{n-1}}{}_{j_n}H^{j_1}{}_{i_1}\ldots H^{j_n}{}_{i_n}\end{multline*}
by relabelling the indexes, and the equality
$$f'_{n-1}(x_1,\ldots,x_n)=\sum_{k=1}^nh_{n,k}(x_1,\ldots,x_n)$$
holds.
\end{rmk}

\begin{rmk}Despite the fact that the tensor $f_n^\otimes(M,\ldots,M)$ has a lot of symmetries (due to the fact that $f_n$ is a symmetric function), the $n$-linear application $F^{(n)}(0).(H_1,\ldots,H_n)$ obtained by polarization is not in general given by $n!$ times this tensor, but by a symmetrization of it.
\begin{align*}F^{(n)}(0).(H_1,\ldots,H_n)^i{}_j&=\left(\sum_{\sigma\in\Perm_n}f_n^\otimes(M,\ldots,M)^i{}_{j_{\sigma(1)}}{}^{i_{\sigma(1)}}{}_{j_{\sigma(2)}}\ldots^{i_{\sigma(n-1)}}{}_{j_{\sigma(n)}}{}^{i_{\sigma(n)}}{}_j\right)\\
&\qquad\qquad(H_1)^{j_1}{}_{i_1}\ldots(H_n)^{j_n}{}_{i_n}\\
&\neq n!f_n^\otimes(M,\ldots,M)^i{}_{j_1}{}^{i_1}{}_{j_2}\ldots^{i_{n-1}}{}_{j_n}{}^{i_n}{}_j(H_1)^{j_1}{}_{i_1}\ldots(H_n)^{j_n}{}_{i_n},\end{align*}
where $\Perm_n$ is the set of permutations of $\{1,\ldots,n\}$.

For example, if $f(z)=z^n$, we have $f_n=1$ and then $f_n^\otimes(M,\ldots,M)=I^{\otimes n+1}$, but we have
$$\sum_{\sigma\in \Perm_n}H_{\sigma(1)}\ldots H_{\sigma(n)}\neq n!H_1\ldots H_n.$$
\end{rmk}

If a function $f:\C\mapsto\C$ is $\Rl$-differentiable but not holomorphic, then $f(M)$ is well defined if $M$ has no multiple eigenvalues.
One can wonder if $f(M)$ is $\Rl$-differentiable.
The answer is yes, but to the author's knowledge, one does not have a nice expression of the derivative like we have in the holomorphic case.
But we can use the fact that
$$f(M)=\sum_{\lambda\textrm{ eigenvalue of }M}f(\lambda)P_M(\lambda),$$
where $P_M(\lambda)$ is the projection on the eigensubpace of $M$ corresponding to the eigenvalue $\lambda$, parallelwise to all the other eigensubspaces of $M$.

We set $M(t)=M+tH$.
Because of the continuity of eigenvalues, there exist continuous functions $\lambda_k(t)$ for $1\leq k\leq \dim(E)$, defined in a neighborhood of $0$, such that for a given $t$, the $\lambda_k(t)$ are the eigenvalues of $M(t)$.
The projectors $P_{M(t)}(\lambda_k(t))$ are also continuous.
As we will see below, the $\lambda_k(t)$ and $P_{M(t)}(\lambda_k(t))$ are analytic.
So if we set $F(t)=f(M(t))$, we have $$F(t)=\sum_{k=1}^{\dim(E)}f(\lambda_k(t))P_{M(t)}(\lambda_k(t))$$
Thus if $f$ is $n$ times $\Rl$-differentiable, we get
$$F^{(n)}(t)=\sum_{k=1}^{\dim(E)}\sum_{h=0}^n\binom{n}{h}\frac{\der}{\der t}^hf(\lambda_k(t))\frac{\der}{\der t}^{n-h}P_{M(t)}(\lambda_k(t)).$$
The derivatives of $f(\lambda_k(t))$ can be expressed with the derivatives of $f$ and $\lambda_k$ thanks to the Faa di Bruno formula.

Now we look at the behaviour of $\lambda_k(t)$ and $P_{M(t)}(\lambda_k(t))$ near $0$.
Since the eigenvalues of $M$ are different, we can set $2\delta>0$ the minimum of the distance between two of them.
So the balls $\ball(\lambda_k,\delta)$ do not overlap (with $\lambda_k=\lambda_k(0)$).
Because of the continuity of eigenvalues, there exists $\varepsilon>0$ such that for $|t|\leq\varepsilon$, we have $\lambda_k(t)\in\ball(\lambda_k,\delta)$ for all $1\leq k\leq \dim(E)$.
Let us denote by $u^{(\lambda)}_{(\delta)}$ the characteristic function of $\ball(\lambda,\delta)$, which is holomorphic everywhere except on the boundary of $\ball(\lambda,\varepsilon)$.
Then we have, for $t<\varepsilon$,
\begin{align*}\lambda_k(t)&=\Tr(M(t)u^{(\lambda_k)}_{(\delta)}(M(t)))\\
P_{M(t)}(\lambda_k(t))&=u^{(\lambda_k)}_{(\delta)}(M(t)).
\end{align*}

To get the derivatives of these two guys, one can use Proposition \ref{iter}, but using the fact that the derivative of $zu^{(\lambda_k)}_{(\delta)}(z)$ is $u^{(\lambda_k)}_{(\delta)}(z)$ and Remark \ref{trace}, we get $\lambda_k^{(n)}(t)=\Tr\left(\frac{\der}{\der t}^{n-1}\left(P_{M(t)}(\lambda_k(t))\right)H\right)$, so we only have to look at the derivatives of $P_{M(t)}(\lambda_k(t))$.
Using Proposition \ref{iter}, we get:
$$\frac{\der}{\der t}^{n}(P_{M(t)}(\lambda_k(t)))^i{}_j=n!u^{(\lambda_k)\otimes}_{(\delta)n}(M(t),\ldots,M(t))^i{}_{j_1}{}^{i_1}{}_{j_2}\ldots^{i_{n-1}}{}_{j_n}{}^{i_n}{}_jH^{j_1}{}_{i_1}\ldots H^{j_n}{}_{i_n}.$$
If we compute that at $t=0$, we can replace $u^{(\lambda_k)}_{(\delta)}$ with $u^{(\lambda_k)}_{(\delta')}$ with $\delta'\leq\delta$, since this two functions coincide on a neighborhood of the spectrum of $M$.

We have the following interesting fact: for all $n$ the functions $u^{(\lambda)}_{(\delta)n}$ simply converge to a limit we will denote by $u^{(\lambda)}_n$ when $\delta$ tends to $0$ (these functions are not defined on the whole space $\C^{n+1}$, but for any single point, there are only finitely many bad $\delta$'s, and furthermore, for any $(x_0,\ldots,x_n)$, the function $\delta\mapsto u^{(\lambda)}_{(\delta)n}(x_0,\ldots,x_n)$ is constant by parts).

The function $u^{(\lambda)}_n$ is the symmetric function of $n+1$ variables such that for all $0\leq m\leq n+1$ and $z_0,\ldots,z_{n-m}\neq\lambda$, we have
$$u^{(\lambda)}_n(\underbrace{\lambda,\ldots,\lambda}_{m\textrm{ times}},z_0,\ldots,z-m)=\left\{\begin{array}{ll}0&\textrm{if }m=0\\
\begin{array}{c}\frac{1}{(m-1)!}\left.\frac{\der}{\der z}^{m-1}\prod_{h=0}^{n-m}\frac{1}{z-z_h}\right|_{z=\lambda}\\
\scriptstyle=(-1)^{(m-1)}\sum_{k_0+\ldots+k_{n-m}=m-1}\prod_{h=0}^{n-m}\frac{1}{(\lambda-z_h)^{1+k_h}}\end{array}&\textrm{else.}\end{array}\right.$$

So we get
$$\frac{\der}{\der t}^{n}(P_{M(t)}(\lambda_k(t)))^i{}_j=n!u^{(\lambda_k(t))\otimes}_n(M(t),\ldots,M(t))^i{}_{j_1}{}^{i_1}{}_{j_2}\ldots^{i_{n-1}}{}_{j_n}{}^{i_n}{}_jH^{j_1}{}_{i_1}\ldots H^{j_n}{}_{i_n}.$$
And finally, we have
$$\begin{array}{l}\scriptstyle\frac{\der}{\der t}^{n}(P_{M(t)}(\lambda_k(t)))=n!\sum_{1\leq k_0,\ldots,k_n\leq\dim(E)}u^{(\lambda_k(t))}_n(\lambda_{k_0}(t),\ldots,\lambda_{k_n}(t))P_{M(t)}(\lambda_{k_0}(t))HP_{M(t)}(\lambda_{k_1}(t))H\ldots HP_{M(t)}(\lambda_{k_n}(t))\\
\scriptstyle\lambda^{(n)}_k(t)=(n-1)!\sum_{1\leq k_0,\ldots,k_{n-1}\leq\dim(E)}u^{(\lambda_k(t))}_{n-1}(\lambda_{k_0}(t),\ldots,\lambda_{k_{n-1}}(t))\Tr(P_{M(t)}(\lambda_{k_0}(t))H\ldots HP_{M(t)}(\lambda_{k_{n-1}}(t))H).
\end{array}$$

The following proposition shows how we can get $\sum_{i_1\neq i_2\neq\ldots\neq i_k}f(\lambda_{i_1},\ldots,\lambda_{i_k})$ from the tensor $f^\otimes(M,\ldots,M)$, where the $\lambda_i$'s are the eigenvalues of $M$.
This is a generalization of $\Tr(f(M))=\sum_if(\lambda_i)$.

\begin{prop}Let $M$ be an endomorphism of a $\C$-vector space $E$ of finite dimension $d$.
Let $f$ be a function of $k$ variables such that $f^\otimes(M,\ldots,M)$ is well defined.
We set $\lambda_1,\ldots,\lambda_d$ the eigenvalues of $M$ (appearing with their multiplicities).
Then we have the following:
$$\sum_{\substack{1\leq n_1,\ldots,n_k\leq d\\\forall l\neq l',n_l\neq n_{l'}}}f(\lambda_{n_1},\ldots,\lambda_{n_k})=k!f^\otimes(M,\ldots,M)^{i_1}{}_{j_1}\ldots^{i_k}{}_{j_k}(\Pi^\wedge_k)^{j_1}{}_{i_1}\ldots^{j_k}{}_{i_k},$$
where
$$(\Pi^\wedge_k)^{j_1}{}_{i_1}\ldots^{j_k}{}_{i_k}=\frac{1}{k!}\sum_{\sigma\in\Perm_k}\epsilon(\sigma)I^{j_1}{}_{i_{\sigma(1)}}I^{j_2}{}_{i_{\sigma(2)}}\ldots I^{j_k}{}_{i_{\sigma(k)}},$$
with $\epsilon(\sigma)$ the signature of the permutation $\sigma$.
The tensor $(\Pi^\wedge_k)$ is the one corresponding to the canonical projection from $E^{\otimes k}$ to the subspace $E^{\wedge k}$ of antisymmetric tensors of order $k$.
\end{prop}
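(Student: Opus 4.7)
The plan is to follow the standard three-step reduction used throughout the paper: first reduce from general $f$ to a polynomial via Lagrange--Sylvester interpolation, then reduce from polynomials to monomials by linearity, and finally verify the identity for a monomial by an explicit contraction followed by a sign-cancellation argument.

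For the first reduction, if $P$ is any interpolation polynomial of $f$ as in Definition~\ref{funcM}, then $f^\otimes(M,\ldots,M)=P^\otimes(M,\ldots,M)$, so the right-hand side of the proposition is unchanged on passing from $f$ to $P$. Matching the order-zero derivatives at the eigenvalue tuples also gives $f(\lambda_{n_1},\ldots,\lambda_{n_k})=P(\lambda_{n_1},\ldots,\lambda_{n_k})$ for every choice of indices, so the left-hand side agrees too. Both sides being linear in $f$, it suffices to treat a monomial $f(x_1,\ldots,x_k)=x_1^{a_1}\cdots x_k^{a_k}$.

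For such a monomial, $f^\otimes(M,\ldots,M)=M^{a_1}\otimes\cdots\otimes M^{a_k}$. Expanding $\Pi^\wedge_k$ as in its definition and performing the contraction, the key observation is that the indices belonging to a single cycle of a permutation $\sigma$ chain together into a closed loop, so that
$$k!\,f^\otimes(M,\ldots,M)^{i_1}{}_{j_1}\ldots^{i_k}{}_{j_k}(\Pi^\wedge_k)^{j_1}{}_{i_1}\ldots^{j_k}{}_{i_k}=\sum_{\sigma\in\Perm_k}\epsilon(\sigma)\prod_{C\in\mathrm{Cyc}(\sigma)}\Tr\Bigl(\prod_{c\in C}M^{a_c}\Bigr).$$
Since all powers of $M$ commute, the product of powers of $M$ along a cycle $C$ equals $M^{\sum_{c\in C}a_c}$, whose trace is $\sum_{i=1}^d\lambda_i^{\sum_{c\in C}a_c}$.

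The final step is a standard sign-cancellation. Swapping the sum over $\sigma$ with the sums over the choices of an index for each cycle, the right-hand side becomes
$$\sum_{\phi:\{1,\ldots,k\}\to\{1,\ldots,d\}}\Biggl(\sum_{\substack{\sigma\in\Perm_k\\\phi\text{ constant on cycles of }\sigma}}\epsilon(\sigma)\Biggr)\prod_{l=1}^k\lambda_{\phi(l)}^{a_l}.$$
The condition that $\phi$ be constant on the cycles of $\sigma$ is equivalent to $\sigma$ lying in the direct product of the symmetric groups on the non-empty fibers of $\phi$, so the inner sum factors into $\prod_c\sum_{\tau\in\Perm(\phi^{-1}(c))}\epsilon(\tau)$. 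This product equals $1$ when every fiber is a singleton and $0$ otherwise, because $\sum_{\Perm_n}\epsilon=0$ for $n\geq2$. Only injective $\phi$ survive, reproducing exactly $\sum_{n_1\neq\cdots\neq n_k}\lambda_{n_1}^{a_1}\cdots\lambda_{n_k}^{a_k}$.

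The main obstacle I expect is the last bookkeeping step: correctly recognizing the condition ``$\phi$ constant on the cycles of $\sigma$'' as ``$\sigma$ permutes each fiber of $\phi$ within itself'', and then invoking the vanishing of the sum of signs on $\Perm_n$ for $n\geq2$. The cycle/trace computation in the middle is straightforward once one writes out the contraction explicitly, as long as one remembers that the indices coupled by $\Pi^\wedge_k$ force each cycle of $\sigma$ to produce a single trace.
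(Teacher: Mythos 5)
Your proof is correct and follows essentially the same route as the paper: both evaluate the contraction against each permutation $\sigma$ as a sum of $f$ over eigenvalue assignments constant on the cycles of $\sigma$, and both finish with the same sign-cancellation over the subgroup of permutations preserving the fibers of the assignment (the paper phrases this via a transposition of signature $-1$ splitting the subgroup in half, you via $\sum_{\Perm_n}\epsilon=0$ for $n\geq2$). The only difference is that the paper obtains the intermediate cycle formula by invoking Theorems \ref{contr1} and \ref{contr2} for general $f$, whereas you reduce to monomials and compute the traces of products of powers of $M$ directly --- an equivalent, slightly more self-contained computation.
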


\begin{proof}Let $\sigma\in\Perm_k$.
We set $\Omega(\sigma)$ the number of orbits of $\sigma$, and we denote by $\omega_0,\ldots,\omega_{\Omega(\sigma)}$ these orbits.
For $1\leq l\leq k$, we set $1\leq c(l)\leq \Omega(\sigma)$ the unique index such that $l\in\omega_{c(l)}$.

Then, using theorems \ref{contr1} and \ref{contr2}, we have the following:
$$f^\otimes(M,\ldots,M)^{i_1}{}_{j_1}\ldots^{i_k}{}_{j_k}I^{j_1}{}_{i_{\sigma(1)}}I^{j_2}{}_{i_{\sigma(2)}}\ldots I^{j_k}{}_{i_{\sigma(k)}}=\sum_{1\leq n_1,\ldots,n_{\Omega(\sigma)}\leq d}f(\lambda_{n_{c(1)}},\ldots,\lambda_{n_{c(k)}}).$$

Now, for $1\leq n_1,\ldots,n_k\leq d$, we look at the number of times the term $f(\lambda_{n_1},\ldots,\lambda_{n_k})$ appears when we use the formula above to compute
$$\sum_{\sigma\in\Perm_k}\epsilon(\sigma)f^\otimes(M,\ldots,M)^{i_1}{}_{j_1}\ldots^{i_k}{}_{j_k}I^{j_1}{}_{i_{\sigma(1)}}I^{j_2}{}_{i_{\sigma(2)}}\ldots I^{j_k}{}_{i_{\sigma(k)}}.$$
This term appears once for each $\sigma$ such that $l\mapsto n_l$ is constant on all the orbits of $\sigma$, with the prefactor $\epsilon(\sigma)$.
The set of such permutations is in fact a subgroup of $\Perm_k$, more precisely the one of permutations which stabilize the equivalence classes of the relation $l\sim l'\Leftrightarrow n_l=n_{l'}$.

If there exist two different indexes $p$ and $q$ such that $n_p$ and $n_q$ are equal, then the above-mentionned subgroup contains the transposition $(p,q)$ which has signature $-1$, and thus, since $\epsilon$ is a group morphism, half of the elements of the subgroup has signature $1$ and the other half has signature $-1$, so the sum of the signatures is $0$.

If all the $n_l$ are different, then the subgroup is just the identity, so our term only appears once.

Hence we have
$$\sum_{\sigma\in\Perm_k}\epsilon(\sigma)f^\otimes(M,\ldots,M)^{i_1}{}_{j_1}\ldots^{i_k}{}_{j_k}I^{j_1}{}_{i_{\sigma(1)}}I^{j_2}{}_{i_{\sigma(2)}}\ldots I^{j_k}{}_{i_{\sigma(k)}}=\sum_{\substack{1\leq n_1,\ldots,n_k\leq d\\\forall l\neq l',n_l\neq n_{l'}}}f(\lambda_{n_1},\ldots,\lambda_{n_k}),$$
as wanted.
\end{proof}

A classical example is given by taking $k=d$, and $f(x_1,\ldots,x_d)=x_1\ldots x_d$, in which case we can get
$$\det(M)=\sum_{a_1+2a_2+\ldots+da_d=d}\frac{(-1)^{d-a_1-a_2-\ldots-a_d}}{a_1!a_2!\ldots a_d!1^{a_1}2^{a_2}\ldots d^{a_d}}\Tr(M)^{a_1}\Tr(M^2)^{a_2}\ldots\Tr(M^d)^{a_d}.$$

The restriction to $E^{\wedge k}$ of $\Pi^\wedge_kf^\otimes(M,\ldots,M)$, where $\Pi^\wedge_k$ and $f^\otimes(M,\ldots,M)$ are seen as endomorphisms of $E^\otimes k$, can also be itself interesting (and not only its trace).
Let us denote it by $f^\wedge(M,\ldots,M)$.

Indeed, since $E^{\wedge k}$ has dimension $\binom{d}{k}$, $f^\wedge(M,\ldots,M)$ lives in a space of dimension $\binom{d}{k}^2$, whereas $f^\otimes(M,\ldots,M)$ lives in a space of much greater dimension $d^{2k}$.

The eigenvalues of $f^\wedge(M,\ldots,M)$ are $\frac{1}{k!}\sum_{\sigma\in\Perm_k}f(\lambda_{n_{\sigma(1)}},\ldots,\lambda_{n_{\sigma(k)}})$ for $1\leq n_1<n_2<\ldots<n_k\leq d$ (the corresponding eigenvector is $v_{n_1}\wedge\ldots\wedge v_{n_k}$ if $M$ is diagonalizable and if $v_1,\ldots,v_d$ is a basis of eigenvectors of $M$, such that $v_n$ is an eigenvector of $M$ for the eigenvalue $\lambda_n$).

One could want to extend the definition of $f^\wedge(M,\ldots,M)$ to some cases in which $f^\otimes(M,\ldots,M)$ is not well defined.
For example, for $f(x,y)=\frac{1}{(x-y)^2}$, $f^\wedge(M,M)$ should have a sense if $M$ only has simple eigenvalues, whereas $f^\otimes(M,M)$ has no sense, whatever $M$ is.
But the minimal conditions we should put on $f$ and the $M_l$'s to extend the definition of $f^\wedge(M_1,\ldots,M_k)$ are not clear.

If the fact that $f^\otimes(M_1,\ldots,M_k)$ is a tensor and not a matrix is disturbing, there could be a way to define a kind of $f(M_1,\ldots,M_k)$ which would be a matrix.
One way to get a matrix from the tensor $f^\otimes(M_1,\ldots,M_k)$ is to use contractions.
Assume that one can write $f(x_1,\ldots,x_k)=\sum a_{n_1,\ldots,n_k}x_1^{n_1}\ldots x_k^{n_k}$, where we have $\sum |a_{n_1,\ldots,n_k}|\rho_1^{n_1}\ldots\rho_k^{n_k}<\infty$ for some $\rho_1,\ldots,\rho_k>0$.
Assume that for every $1\leq l\leq k$, we have $\sup_{\lambda\textrm{ eigenvalue of }M_l}|\lambda|<\rho_l$.
Then the sum:
$$\sum a_{n_1,\ldots,n_k}(M_1^{n_1}M_2^{n_2}\ldots M_k^{n_k})^i{}_j$$
is convergent, and this matrix is exactly
$$f^\otimes(M_1,\ldots,M_k)^i{}_{i_1}{}^{i_1}{}_{i_2}\ldots^{i_{k-2}}{}_{i_{k-1}}{}^{i_{k-1}}{}_j.$$
It is what we expect $f(M_1,\ldots,M_k)$ to be when all the $M_l$ commute.
But if they do not commute, this may introduce a dissymetry.
It does still work well if $f(x_1,\ldots,x_k)=\sum_{l=1}^kf_l(x_l)$.

But in the simple example $f(x,y)=(x+y)^2$, we have $f^\otimes(A,B)^i{}_k{}^k{}_j=(A^2+B^2+2AB)^i{}_j$ and not $((A+B)^2)^i{}_j$ as one could want.

If $f(x,y)=\frac{1}{x+y}$, and provided none of the eigenvalues of $A$ is the opposite of an eigenvalue of $B$, it gives you the unique matrix $M$ such that $AM+MB=I$, and this matrix is not $(A+B)^{-1}$ in general.
\bibliographystyle{abbrv}
\bibliography{base}
\end{document}